\documentclass[12pt]{article}

\usepackage{hyperref}

\usepackage{amsfonts, amsmath, amssymb, amsthm}
\usepackage{a4}

\usepackage{blkarray}
\usepackage{bbm}
\usepackage{upgreek}

\usepackage{bm}% bold math

\usepackage{tikz}
\usepackage{subcaption}
% \usepackage{caption}
% \DeclareCaptionLabelFormat{cont}{#1~#2\alph{ContinuedFloat}}
% \captionsetup[ContinuedFloat]{labelformat=cont}
\usepackage{graphicx}

 \usepackage{xcolor}

 \definecolor{Refkey}{RGB}{255,127,0}
 \definecolor{Labelkey}{RGB}{127,0,255}
 \makeatletter 
  \def\SK@refcolor{\color{Refkey}}
  \def\SK@labelcolor{\color{Labelkey}}
 \makeatother

  \definecolor{mdg}{RGB}{0,177,0} % {0,127,0}
  \definecolor{mdb}{RGB}{0,0,191}
  \definecolor{mddb}{RGB}{0,0,91}
  \definecolor{mdy}{RGB}{255,69,0} 
  \definecolor{gray}{RGB}{99,99,99}

\newtheorem{proposition}{Proposition}
\newtheorem{theorem}[proposition]{Theorem}
\newtheorem{lemma}{Lemma}

\theoremstyle{definition}

\newtheorem{ir}{Important Remark}

\theoremstyle{remark}
\newtheorem*{remark}{Remark}

\allowdisplaybreaks[3]

\title{Combinatorial correlation functions in three-dimen\-sional eight-vertex models}
\author{Igor G. Korepanov}

\date{November 2023 -- February 2024}

\begin{document}

 \sloppy

\maketitle

\medskip

\begin{abstract}
A new version of the self-similarity spin transform on three-dimen\-sional cubic lattices is proposed that makes possible calculation of nontrivial spin correlations in a ``combinatorial'' model, in which all permitted spin configurations have equal probabilities.\end{abstract}

\section{Introduction}\label{s:i}

\subsection{What this paper is about}\label{ss:a}

In this paper, we deal with finite parts of a three-dimen\-sional \emph{cubic lattice}~$\mathbb Z^3$. This lattice consists of points in Euclidean space~$\mathbb R^3$ having integer coordinates; below we call these points \emph{vertices}. Besides vertices, we will need \emph{edges}---these are, by definition, straight line segments of length one joining two vertices. Each edge is, this way, parallel to one of coordinate axes. Moreover, each edge is \emph{directed}, namely in the (positive) direction of the corresponding coordinate axis.

When we speak about a finite part~$\mathcal P$ of our lattice, we always assume that it includes, together with any of its vertices~$v$, all six of its adjacent edges. If such an edge~$e$ has no vertex on its other end (that is, that vertex is not in~$\mathcal P$), then we say that $e$ is an \emph{input} edge for~$\mathcal P$ if $e$ is directed towards~$v$, or \emph{output} edge if $e$ is out of~$v$.

Next, we assume that there is a ``spin'' variable attached to each edge of~$\mathcal P$ and taking values in a set of two elements. Below, we identify that set with the smallest finite field~$\mathbb F_2$; its elements are denoted simply as $0$ and~$1$, if no confusion with decimal digits is expected.

There are, this way, six spin variables around each vertex; if their values are given, we say that a \emph{local spin configuration} is given. Suppose we have declared some of such configurations \emph{permitted} (and the rest of them \emph{prohibited}). If there are spin values given on all edges of~$\mathcal P$ and such that their configuration around each vertex in~$\mathcal P$ is permitted, we say that there is a permitted spin configuration on the whole~$\mathcal P$.

Assume now that all permitted spin configurations on~$\mathcal P$ have equal probabilities (and prohibited ones have probability zero; we are thus in a ``classical definition of probability'' context). We can then pose the question of calculating different \emph{correlation functions}---we use this term broadly for any ``joint'' probabilities and, more specifically, we consider in this paper the probabilities of the events that two, three, or four different spins all take the value zero.

This problem is hard and not expected to be solved in full generality. We begin here with a relatively simple case when three output spins, for each separate vertex, depend \emph{$\mathbb F_2$-linearly} on three input spins; dependence is given by a $3\times 3$ matrix~$A$ with entries in~$\mathbb F_2$, the same for all vertices. This implies that there are only eight permitted among $2^6=64$ local spin configurations, and that is why we call our construction ``eight-vertex model''. This model may look a bit rigid; nevertheless, it is genuinely three-dimen\-sional, and reveals both fascinating mathematical structures and nontrivial calculation results for the correlations.

\subsection{The methods used}\label{ss:m}

We further develop the ``algebraic self-similarity'' theory for models associated with finite fields. Here we do it for the case of field characteristic two.

Recall that algebraic self-similarity was discovered in~\cite{block-spin}. It was inspired, from the physics side, by the Kadanoff--Wilson theory~\cite{Kadanoff,Wilson}, and from the side of mathematics, by Hietarinta's work~\cite{Hietarinta}. An important difference from Hietarinta's paper is, however, that we do \emph{not} use any ``integrability'' conditions like Zamolodchikov's tetrahedron equation~\cite{Zamolodchikov}.

\subsection{Remarks on Boltzmann weights}\label{ss:b}

Equal probabilities for all permitted spin configurations on a finite part~$\mathcal P$ of cubic lattice can of course be obtained from the following \emph{local Boltzmann weights}. Assign weight~$1$ to any permitted local spin configuration~$c_v$ around any vertex~$v$, and~$0$ to any prohibited~$c_v$. Then proceed as usual: Boltzmann weight~$W(c)$ for a configuration~$c$ on the whole~$\mathcal P$ is the product of weights~$w(c_v)$ of its local restrictions~$c_v$ over all vertices~$v$:
\begin{equation*}
W(c) = \prod _v w(c_v)
\end{equation*}
(this is again one or zero), and the probability of~$c$ is
\begin{equation}\label{P(c)}
P(c) = \frac{W(c)}{\sum _{\mathrm{all}\;c} W(c)}.
\end{equation}
The denominator in the rhs of~\eqref{P(c)} is called \emph{state sum}; with our Boltzmann weights, it is of course nothing but the number of all permitted~$c$.

In this paper, we consider only such ``combinatorial'' probabilities. There is, however, a strong indication that \emph{arbitrary real positive} Boltzmann weights can also, in the future, be included in our theory: it has already been shown that algebraic self-similarity can work for them as well, see~\cite[Section~VII]{block-spin}.

\subsection{The results}\label{ss:d}

The results of this paper consist, first, in the discovery of such $\mathbb F_2$-linear transformation of our spin variables that makes possible calculating correlations even for spins that are far away from each other, and second, in the actual calculations of these correlations. Below in this subsection, we
\begin{itemize}\itemsep 0pt
 \item recall the general scheme of this spin transformation,
 \item explain what difficulties occurred with its realization in~\cite{block-spin} and can be overcome with the new realization proposed in this paper,
 \item describe the kinds of correlations calculated in this paper.
\end{itemize}

\subsubsection{General facts about the linear transformation of spin variables that apply to both its version in~\cite{block-spin} and its new version introduced below in Section~\ref{s:nb}}\label{sss:b}

Let $A=(a_{ij})$ be a $3\times 3$ matrix whose entries lie in a field~$F$ of characteristic two, and consider a part~$\mathcal P$ of cubic lattice in the form of $2\times 2\times 2$ block, such as shown in Figure~\ref{f:b222},
\begin{figure}
 \centering
\begin{tikzpicture}
\draw[ thick ] (2, 0.3) -- (2, 5.7);
\draw[ thick ] (4, 0.3) -- (4, 5.7);
\draw[ thick ] (0.3, 2) -- (5.7, 2);
\draw[ thick ] (0.3, 4) -- (5.7, 4);

\draw[ thick ] (2+1.5, 0.3+1.1) -- (2+1.5, 5.7+1.1);
\draw[ thick ] (4+1.5, 0.3+1.1) -- (4+1.5, 5.7+1.1);
\draw[ thick ] (0.3+1.5, 2+1.1) -- (5.7+1.5, 2+1.1);
\draw[ thick ] (0.3+1.5, 4+1.1) -- (5.7+1.5, 4+1.1);

\filldraw (2, 2) circle (0.08);
\filldraw (2, 4) circle (0.08);
\filldraw (4, 2) circle (0.08);
\filldraw (4, 4) circle (0.08);

\draw (2, 2) node[anchor=south east] {$A\!$} ;
\draw (2, 4) node[anchor=south east] {$A\!$} ;
\draw (4, 2) node[anchor=south east] {$A\!$} ;
\draw (4, 4) node[anchor=south east] {$A\!$} ;

\filldraw (2+1.5, 2+1.1) circle (0.08);
\filldraw (2+1.5, 4+1.1) circle (0.08);
\filldraw (4+1.5, 2+1.1) circle (0.08);
\filldraw (4+1.5, 4+1.1) circle (0.08);

\draw (2+1.5, 2+1.1) node[anchor=north west] {$\!A$} ;
\draw (2+1.5, 4+1.1) node[anchor=north west] {$\!A$} ;
\draw (4+1.5, 2+1.1) node[anchor=north west] {$\!A$} ;
\draw (4+1.5, 4+1.1) node[anchor=north west] {$\!A$} ;

\draw[ thick ] ( 2-1.5, 2-1.1) -- ( 2+2*1.5, 2+2*1.1 );
\draw[ thick ] ( 2-1.5, 4-1.1) -- ( 2+2*1.5, 4+2*1.1 );
\draw[ thick ] ( 4-1.5, 2-1.1) -- ( 4+2*1.5, 2+2*1.1 );
\draw[ thick ] ( 4-1.5, 4-1.1) -- ( 4+2*1.5, 4+2*1.1 );

\draw[ thick, -latex ] (2, 0.5) -- (2, 1.15);
\draw[ thick, -latex ] (4, 0.5) -- (4, 1.15);
\draw[ thick, -latex ] (2, 6-1.25) -- (2, 6-1.15);
\draw[ thick, -latex ] (4, 6-1.25) -- (4, 6-1.15);

\draw[ thick, -latex ] (0.5, 2) -- (1.15, 2);
\draw[ thick, -latex ] (0.5, 4) -- (1.15, 4);
\draw[ thick, -latex ] (6-1.25, 2) -- (6-0.95, 2);
\draw[ thick, -latex ] (6-1.25, 4) -- (6-0.95, 4);

\draw[ thick, -latex ] (2+1.5, 0.5+1.1) -- (2+1.5, 1.15+1.1);
\draw[ thick, -latex ] (4+1.5, 0.5+1.1) -- (4+1.5, 1.15+1.1);
\draw[ thick, -latex ] (2+1.5, 6-1.25+1.1) -- (2+1.5, 6-1.15+1.1);
\draw[ thick, -latex ] (4+1.5, 6-1.25+1.1) -- (4+1.5, 6-1.15+1.1);

\draw[ thick, -latex ] (0.5+1.5, 2+1.1) -- (1.15+1.5, 2+1.1);
\draw[ thick, -latex ] (0.5+1.5, 4+1.1) -- (1.15+1.5, 4+1.1);
\draw[ thick, -latex ] (6-1.25+1.5, 2+1.1) -- (6-0.95+1.5, 2+1.1);
\draw[ thick, -latex ] (6-1.25+1.5, 4+1.1) -- (6-0.95+1.5, 4+1.1);

\draw[thick, -latex ] ( 2-1.5*0.75, 2-1.1*0.75) -- ( 2-1.5*0.55, 2-1.1*0.55);
\draw[thick, -latex ] ( 2-1.5*0.75, 4-1.1*0.75) -- ( 2-1.5*0.55, 4-1.1*0.55);
\draw[thick, -latex ] ( 4-1.5*0.75, 2-1.1*0.75) -- ( 4-1.5*0.55, 2-1.1*0.55);
\draw[thick, -latex ] ( 4-1.5*0.75, 4-1.1*0.75) -- ( 4-1.5*0.55, 4-1.1*0.55);

\draw[thick, -latex ] (2+1.5*1.35, 2+1.1*1.35) -- (2+1.5*1.55, 2+1.1*1.55);
\draw[thick, -latex ] (2+1.5*1.35, 4+1.1*1.35) -- (2+1.5*1.55, 4+1.1*1.55);
\draw[thick, -latex ] (4+1.5*1.35, 2+1.1*1.35) -- (4+1.5*1.55, 2+1.1*1.55);
\draw[thick, -latex ] (4+1.5*1.35, 4+1.1*1.35) -- (4+1.5*1.55, 4+1.1*1.55);
\end{tikzpicture}
 \caption{$2\times 2\times 2$ block. At each vertex, $3\times 3$ matrix~$A$ transforms the row of three input field~$F$ elements into the row of three output elements. The positions in a row correspond (in a fixed way) to three edge directions. Field $F=\mathbb F_2$ almost everywhere in this paper}
 \label{f:b222}
\end{figure}
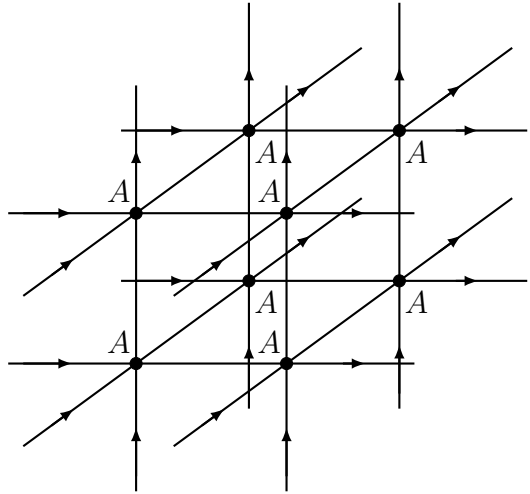
with matrix~$A$ in each of its eight vertices. Block~$\mathcal P$ represents a $12\times 12$ matrix acting in the direct sum $\mathsf V_1\oplus \mathsf V_2\oplus \mathsf V_3$ of three 4-dimen\-sional $F$-linear spaces corresponding to the three directions of coordinate axes. We write the elements of any of these~$\mathsf V_i$ (here as well as in~\cite{block-spin}) as \emph{row} 4-vectors whose components correspond to the four inputs or outputs taken in the following natural order: for an $i$-th coordinate axis, denote the (numbers of) two remaining axes as $j$ and~$k$, with $j<k$, then the inputs/outputs are numbered as follows:
\begin{gather}
 \begin{matrix} 1 & 2 \\ 3 & 4 \end{matrix} \label{1234-intro} \\*
 \begin{array}{lc}
  \text{\footnotesize direction of the $j$-th axis:} & \downarrow \\
  \text{\footnotesize direction of the $k$-th axis:} & \rightarrow
 \end{array} \nonumber
\end{gather}

It was found in~\cite{block-spin} that, under a ``general position'' condition (reproduced in this paper as formula~\eqref{Delta} below), block~$\mathcal P$ is a direct sum of \emph{four} linear operators, each acting in its own \emph{three-dimen\-sional} linear space~$W_j$, \ $j=1,\ldots,4$, whose dimensions correspond to the coordinate axes. Moreover, in matrix form, this direct sum decomposition looks as
\begin{equation}\label{AAAAT}
 \tilde A \oplus \tilde A \oplus \tilde A \oplus \bigl(\tilde A \,\bigr)^{\mathrm T},
\end{equation}
where $\tilde A$ is obtained from~$A$ by the Frobenius endomorphism of its entries: $a_{ij} \mapsto a_{ij}^2$.
 
This means that new bases can be introduced in spaces~$\mathsf V_i$, such that if we denote the new basis vectors in~$\mathsf V_i$ as $\mathsf e_1^{(i)}, \mathsf e_2^{(i)}, \mathsf e_3^{(i)}, \mathsf e_4^{(i)}$, then $W_j$ has, by definition, basis vectors $\mathsf e_j^{(1)}, \mathsf e_j^{(2)}, \mathsf e_j^{(3)}$. This way,
 \begin{equation}\label{VVV}
  \mathsf V_1\oplus \mathsf V_2\oplus \mathsf V_3 = W_1 \oplus W_2 \oplus W_3 \oplus W_4 ,
 \end{equation}
as needed.

\begin{remark}
Beware that we have slightly changed the notations and the order of basis vectors and matrices w.r.t.~\cite[Section~V.A]{block-spin}: in~\cite{block-spin}, $(\tilde A)^{\mathrm T}$ came first, and we renamed $\mathsf f^{(i)}$ into~$\mathsf e_4^{(i)}$.
\end{remark}

We typically use \emph{row} vectors (here as well as in~\cite{block-spin}, although \emph{column} vectors will also appear in this paper, starting from Subsection~\ref{ss:F}), so our matrices act on them from the \emph{right}. In particular, let $(x_j^{(i)})_{\substack{i=1,2,3 \;\; \\ j=1,2,3,4}}$ be coordinates of a vector in the space~\eqref{VVV} w.r.t.\ basis vectors~$\mathsf e_j^{(i)}$, then $\mathcal P$ acts on them as follows (decomposition~\eqref{AAAAT} in a more detailed form):
\begin{equation}\label{tildes}
 \begin{aligned}
  & \begin{pmatrix} x_j^{(1)} & x_j^{(2)} & x_j^{(3)} \end{pmatrix} \mapsto \begin{pmatrix} x_j^{(1)} & x_j^{(2)} & x_j^{(3)} \end{pmatrix} \tilde A \qquad \text{for } j=1,2,3, \\
  & \begin{pmatrix} x_4^{(1)} & x_4^{(2)} & x_4^{(3)} \end{pmatrix} \mapsto \begin{pmatrix} x_4^{(1)} & x_4^{(2)} & x_4^{(3)} \end{pmatrix} \bigl(\tilde A \, \bigr)^{\mathrm T} . \end{aligned}
\end{equation}

We can then take a $2^n\times 2^n\times 2^n$ block of matrices~$A$, and make such basis transformations iteratively. That is, we first do it in each $2\times 2\times 2$ sub-block and represent each of the resulting matrices from decomposition~\eqref{AAAAT} as a vertex. The way how $2\times 2\times 2$ sub-blocks are assembled together means for these new vertices that they are assembled into four separate $2^{n-1}\times 2^{n-1}\times 2^{n-1}$ blocks, three made of copies of~$\tilde A$, and one---of copies of~$\bigl(\tilde A \,\bigr)^{\mathrm T}$.
 
Then we proceed similarly in each of these blocks, and so on; the final result that we obtain after $n$~steps is, as \cite[Corollary~11]{block-spin} states, such bases where our initial block is represented by a direct sum of $2^{2n-1}+2^{n-1}$ matrices~$A^{(n)}$ and $2^{2n-1}+2^{n-1}$ matrices~$\bigl(A^{(n)}\bigr)^{\mathrm T}$, where $A^{(n)}$ is obtained from~$A$ by applying the Frobenius endomorphism $n$ times.

\subsubsection{Why our new version of spin transform is needed}\label{sss:w}

One essential difficulty with our old bases in $\mathsf V_1$, $\mathsf V_2$ and~$\mathsf V_3$, written out in~\cite[Formulas (21), (23) and~(24)]{block-spin}, was that it was hard, if possible at all, to obtain a general explicit expression for the resulting transform for a $2^n\times 2^n\times 2^n$ block with an arbitrary~$n$. This is why we consider it important that we can now propose another version of the same construction (described in the same general words as in Subsubsection~\ref{sss:b}, but the concrete formulas for new bases will be different from~\cite{block-spin}), where the transform for an arbitrary~$n$ is quite easily written. We call it now ``transformation from $i$-spins (initial spins) to $t$-spins (transformed spins)''.

\subsubsection{Correlations calculated in this paper}\label{sss:c}

We consider correlations for blocks $2^n\times 2^n\times 2^n$, with \emph{cyclic} (triply periodic) boundary conditions. That is, spin on each output edge is required to coincide with its input counterpart lying on the same straight line parallel to one of coordinate axes. Then we take spins on all input/output edged parallel to the \emph{first} axis (due to the cyclic boundary conditions, this is the same as if we took all edges intersecting \emph{any} fixed plane perpendicular to the first axis and not coming through the vertices), and then we are able to calculate, for a number of specific matrices~$A$, \emph{all} 2-spin, 3-spin and 4-spin correlations for these spins. Interestingly, \emph{4-spin} correlations show quite a nontrivial behavior.

\subsection{Contents of the remaining sections}\label{ss:c}

Below,
\begin{itemize}\itemsep 0pt
 \item in Section~\ref{s:nb}, we describe our new version of the self-similarity spin transform,
 \item in Section~\ref{s:sc}, we describe our approach to spin correlations, and how a Fourier transform can help in calculating them,
 \item in Section~\ref{s:t}, we explain the calculation of probabilities for the individual spins appearing as a result of the spin transform,
 \item in Section~\ref{s:A}, we present calculations for specific three-dimen\-sional eight-vertex models.
\end{itemize}

\section{New bases in thick spaces}\label{s:nb}

\subsection[Terminology: $i$-spins and $t$-spins]{Terminology: $\bm i$-spins and $\bm t$-spins}\label{ss:i,t}

Below, we work with $3\times 3$ matrices~$A$ whose entries lie in a field~$F$ of characteristic two. Until Subsection~\ref{ss:2dn}, we do not specify~$F$, although we keep in mind mainly the smallest field~$\mathbb F_2$ of two elements. We think of a $2^n\times 2^n\times 2^n$ block of copies of such matrix~$A$, in accordance with~\cite[Definition~3]{block-spin}, as a matrix that acts in the direct sum $\mathsf V_1\oplus \mathsf V_2\oplus \mathsf V_3$ of three $2^n\times 2^n$-dimen\-sional $F$-linear spaces corresponding to the three directions of coordinate axes. In~\cite{block-spin}, we used for such~$\mathsf V_j$, \ $j=1,2,3$, the name ``thick spaces'' (while an individual matrix~$A$ was thought of as acting in the direct sum of three ``thin''---one-dimen\-sional---linear spaces). We write the elements of any of these~$\mathsf V_j$ as \emph{row} vectors. The entries of these vectors corresponding to individual input edges (and to their output counterparts) are called below \emph{$i$-spins}---``initial spins'' (the \emph{order} of these entries will be specified when necessary).

There is an implied standard basis in such a linear space of row vectors, where $k$-th basis row has entry~$1\in F$ at the $k$-th place, and entries~$0\in F$ at all other places. Such basis in any of the spaces~$\mathsf V_j$ will be called \emph{$i$-basis}.

After we have applied $n$ times our basis changes, in the manner that was described in~\cite{block-spin} and recalled in Subsubsection~\ref{sss:b}, there appears a new basis in each~$\mathsf V$. We call it \emph{$t$-basis}, and the coordinates of a vector w.r.t.\ this basis---\emph{$t$-spins}.

We are now going to write out the actual form of the $t$-basis that we are proposing in this paper, concentrating mostly on the space~$\mathsf V_1$ corresponding to the first coordinate axis.

\subsection[Spin transform for a $2\times 2\times 2$ block]{Spin transform for a $\bm{2\times 2\times 2}$ block}\label{ss:2}

In this subsection, we introduce our newly discovered $t$-bases in linear spaces $\mathsf V_1, \mathsf V_2, \mathsf V_3$. They have the properties described in Subsubsection~\ref{sss:b}; below we also summarize these properties in Proposition~\ref{p:t}.

We denote entries of matrix~$A$ as~$a_{jk}\in F$, and their corresponding minors (determinants of~$A$ with $j$-th row and $k$-th column deleted) as~$m_{jk}$ (for instance, $m_{23} = a_{11}a_{32}+a_{12}a_{31}$). We assume the ``general position'' condition~\cite[(16)]{block-spin}, and write it here as
\begin{equation}\label{Delta}
 \Delta \stackrel{\mathrm{def}}{=} a_{12}a_{23}a_{31} + a_{13}a_{32}a_{21} \ne 0.
\end{equation}

\subsubsection{Spins at the input or output perpendicular to the first coordinate axis}\label{sss:a1}

Our new basis in~$\mathsf V_1$ (replacing the old basis~\cite[(21)]{block-spin}) admits the following elegant description. First, we introduce
$2\times 2$ matrices~$\mathrm G_{ij}$ as follows:
\begin{equation}\label{sfA}
\mathrm G_{ij} \stackrel{\mathrm{def}}{=} \begin{pmatrix} a_{ij} & m_{ji} \\ a_{ji} & m_{ij} \end{pmatrix}.
\end{equation}
Then, by definition, our new $t$-basis for the $2\times 2$ input or output perpendicular to the first coordinate axis---that is, basis $\mathsf e_1^{(1)}, \mathsf e_2^{(1)}, \mathsf e_3^{(1)}, \mathsf e_4^{(1)}$ in the linear space~$\mathsf V_1$, in the notations of Subsubsection~\ref{sss:b}---consists of the rows of the following $4\times 4$ matrix:
\begin{equation}\label{new21}
 \begin{pmatrix} \mathsf e_1^{(1)} \\ \mathsf e_2^{(1)} \\ \mathsf e_3^{(1)} \\ \mathsf e_4^{(1)} \end{pmatrix} = \mathrm G_{13} \otimes \mathrm G_{12}.
\end{equation}
Each of these rows is the tensor product of a row from~$\mathrm G_{13}$ by a row from~$\mathrm G_{12}$. In particular, the \emph{fourth} basis vector in~\eqref{new21},
\begin{equation}\label{21t}
 \mathsf e_4^{(1)} = \begin{pmatrix}a_{31} & m_{13}\end{pmatrix} \otimes \begin{pmatrix}a_{21} & m_{12}\end{pmatrix} = \begin{pmatrix}a_{31}a_{21} & a_{31}m_{12} & m_{13}a_{21} & m_{13}m_{12} \end{pmatrix},
\end{equation}
belongs also to the space~$W_4$ where the transposed matrix~$A^{\mathrm T}$ acts. It coincides with the \emph{first} basis vector~$\mathsf f^{(1)}$ in the basis~\cite[(21)]{block-spin} that we used earlier. It looks convenient, however, for our further calculations to have it at the fourth, rather than the first, place.

The \emph{columns} of~\eqref{new21} correspond to the $i$-spins at the input/output. Their numbering~\eqref{1234-intro} looks, in this case, as follows:
\begin{gather}
 \begin{matrix} 1 & 2 \\ 3 & 4 \\ \end{matrix} \label{1234} \\*
 \begin{array}{lc}
  \text{\footnotesize direction of the 2nd axis:} & \downarrow \\
  \text{\footnotesize direction of the 3rd axis:} & \rightarrow
 \end{array} \nonumber
\end{gather}

There is the following simple but important remark.

\begin{ir}\label{ir:f}
The fact that matrix~\eqref{new21} gives a $t$-basis can be reformulated in any of the following ways:
 \begin{itemize}\itemsep 0pt
  \item $t$-basis is obtained by applying matrix~\eqref{new21} to the column of $i$-basis vectors \emph{from the left}. This is simply because when the $i$-basis vectors (defined in Subsection~\ref{ss:i,t}) are assembled together in a column, they form nothing but the $4\times 4$ identity matrix~$\mathbbm 1_4$,
  \item for a given vector in~$\mathsf V_1$, the row of its coordinates w.r.t.\ $i$-basis is obtained from the row of its coordinates w.r.t.\ $t$-basis by action of matrix~\eqref{new21} from the right.
 \end{itemize}
\end{ir}

The tensor product form~\eqref{new21} of the $t$-basis makes it reasonable to think of the $t$-spins as also organized in a $2\times 2$ lattice like the $i$-spins in~\eqref{1234}. Moreover, we would like to introduce two \emph{ghost coordinate axes} $\alpha$ and~$\beta$ for this arrangement, so that it looks as follows:
\begin{gather}
 \begin{matrix} 1 & 2 \\ 3 & 4 \\ \end{matrix} \label{1234-ghost} \\*
 \begin{array}{lc}
  \text{\footnotesize direction of the ghost axis $\alpha$:} & \downarrow \\
  \text{\footnotesize direction of the ghost axis $\beta$:} & \rightarrow
 \end{array} \nonumber
\end{gather}

There is one more property of key importance that tensor-product basis~\eqref{new21} has: as one can see from the explicit form of matrices~\eqref{sfA}, it remains the \emph{same} if we change $A$ to~$A^{\mathrm T}$, except that its vectors go in the reverse order! This will greatly facilitate our further steps of spin transform, where we will have to work with $2\times 2\times 2$ blocks of matrices~$A^{\mathrm T}$ (we will do it for $F=\mathbb F_2$; the Frobenius endomorphism is identical in that case, and matrices~$A^{\mathrm T}$ appear according to~\eqref{AAAAT}) in parallel with blocks of matrices~$A$. In particular, the \emph{first} row in~\eqref{new21} will belong, in the case of matrices~$A^{\mathrm T}$, to the space where~$A$ (not transposed) acts.

\subsubsection{Spins at the inputs or outputs perpendicular to the other coordinate axes}\label{sss:a23}

Explicit expressions for the bases in the linear spaces $\mathsf V_2$ and~$\mathsf V_3$, corresponding to the two remaining pairs of opposite faces, can be written as follows:
\begin{equation}\label{nm}
 \begin{pmatrix} \mathsf e_2^{(2)} \\ \mathsf e_1^{(2)} \\ \mathsf e_3^{(2)} \\ \mathsf e_4^{(2)} \end{pmatrix} = \mathrm G_{23} \otimes \mathrm G_{21}, 
 \qquad \begin{pmatrix} \mathsf e_3^{(3)} \\ \mathsf e_1^{(3)} \\ \mathsf e_2^{(3)} \\ \mathsf e_4^{(3)} \end{pmatrix} = \mathrm G_{32} \otimes \mathrm G_{31}
\end{equation}
(these bases replace our old bases \cite[(23) and~(24)]{block-spin} in the same spaces).

\begin{ir}\label{ir:r}
Pay attention to the \emph{order} in which the basis vectors are written in the lhs's of the equalities~\eqref{nm}! Below in this paper, we won't deal with spins on edges parallel to coordinate axes $2$ and~$3$, but the mystery of formulas~\eqref{nm} hint at an interesting combinatorics, to be investigated in the future.
\end{ir}

\begin{proposition}\label{p:t}
$2\times 2\times 2$ block of matrices~$A$ (such as in Figure~\ref{f:b222}) acts on rows of vector coordinates in bases \eqref{new21}, \eqref{nm} according to~\eqref{tildes} (where, as we remember, $\tilde A$ is obtained from~$A$ by the Frobenius isomorphism $a_{ij} \mapsto a_{ij}^2$ of all its entries).
\end{proposition}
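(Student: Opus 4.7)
My plan is to verify the proposition directly, using the abstract decomposition theorem of~\cite{block-spin} as a scaffold. Under condition~\eqref{Delta}, the $12\times 12$ block action is already known to be conjugate to $\tilde A \oplus \tilde A \oplus \tilde A \oplus (\tilde A)^{\mathrm T}$, so what remains is to show that the specific tensor-product bases~\eqref{new21}--\eqref{nm} realize this decomposition. I would split the check into (i) identifying the invariant subspaces $W_j$ spanned by the new basis vectors, and (ii) computing the block action in these bases and matching it against~\eqref{tildes}.

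For (i), I would start from the observation that $\mathsf e_4^{(1)}$ in~\eqref{21t} coincides with the vector $\mathsf f^{(1)}$ of~\cite[(21)]{block-spin}, which is already known to span the $W_4$ direction inside $\mathsf V_1$; the analogous identifications hold for $\mathsf e_4^{(2)}$ and $\mathsf e_4^{(3)}$ via~\eqref{nm}. Linear independence of all four rows of~\eqref{new21} follows from $\det(\mathrm G_{13}\otimes\mathrm G_{12}) = (\det\mathrm G_{13})^2(\det\mathrm G_{12})^2$ combined with the direct check $\det\mathrm G_{1k} = a_{1k}m_{1k} + a_{k1}m_{k1} = \Delta\ne 0$ in characteristic two. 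The finer splitting of the complement of $W_4$ into $W_1\oplus W_2\oplus W_3$ is then pinned down by the action in step~(ii).

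For step (ii), I would compute explicitly the image of $\mathsf e_1^{(1)}$, viewed as an element of $\mathsf V_1 \subset \mathsf V_1\oplus\mathsf V_2\oplus\mathsf V_3$, under the block action, and verify that it equals $a_{11}^2 \mathsf e_1^{(1)} + a_{12}^2 \mathsf e_1^{(2)} + a_{13}^2 \mathsf e_1^{(3)}$, i.e.\ the image of the first basis vector of $W_1$ under $\tilde A$. The essential algebraic ingredient is the characteristic-two Frobenius identity $(xy+zw)^2 = x^2 y^2 + z^2 w^2$, which makes $m_{ij}^2$ equal to the corresponding $(i,j)$-minor of~$\tilde A$; this is what causes the products appearing after propagation through the two layers of the block to reassemble into the entries $a_{ij}^2$ of~$\tilde A$. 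The appearance of $(\tilde A)^{\mathrm T}$ on $W_4$, rather than $\tilde A$, is consistent with the $A\leftrightarrow A^{\mathrm T}$ symmetry of $\mathrm G_{ij}$ noted at the end of Subsubsection~\ref{sss:a1}, and I would use that symmetry to derive the $W_4$ case from the $W_1,W_2,W_3$ cases. Cyclic symmetry among the three coordinate axes then extends the calculation from $\mathsf e_1^{(1)}$ to all remaining basis vectors.

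The main obstacle I expect is the bookkeeping. A $2\times 2\times 2$ block has eight vertices, each contributing a three-variable linear relation, and a single input $i$-spin propagates through a network of such relations before emerging on the output face, so the raw intermediate expressions are not compact. The cleanest route, I believe, is to exploit the tensor-product structure of~\eqref{new21}: treat the two ghost axes $\alpha$ and $\beta$ separately, so that the verification splits into two essentially independent $2\times 2$ sub-problems whose combination, via the Frobenius identity, produces the full $\tilde A$ action.
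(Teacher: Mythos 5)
Your plan is essentially the paper's own proof: the paper's entire argument for Proposition~\ref{p:t} reads ``For the time being, it's just a direct calculation,'' i.e.\ a brute-force verification of the block action on the bases \eqref{new21}, \eqref{nm}. Your scaffolding (the check $\det\mathrm G_{1k}=\Delta\ne 0$, the characteristic-two Frobenius identity for the minors, and the splitting of the verification along the two ghost axes) is a reasonable way to organize that same computation, so there is nothing substantive to compare beyond noting that the paper supplies even less detail than you do.
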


\begin{proof}
For the time being, it's just a direct calculation.
\end{proof}

\subsection{Binary numbering for matrix rows and columns}\label{ss:bin}

The natural basis in a linear space of rows or columns over any field consists of rows, resp.\ columns, having a unity at one position and zeros at all other positions. If the dimension (\,= length of rows/columns) of this space is~$2^n$, with an integer~$n$, then we can number these positions with $n$-binary-digit numbers using the following simple principle which we explain on the example of rows.

Each basis row is, in this case, the tensor product of $n\:$ $2$-rows, some of which are $\begin{pmatrix} 1 & 0 \end{pmatrix}$ and the others $\begin{pmatrix} 0 & 1 \end{pmatrix}$. We put binary digits in correspondence to these tensor multipliers as follows:
\begin{equation}\label{01}
  \begin{pmatrix} 1 & 0 \end{pmatrix} \mapsto 0_2, \qquad
  \begin{pmatrix} 0 & 1 \end{pmatrix} \mapsto 1_2,
\end{equation}
and take their \emph{concatenation} in the same order, obtaining thus an \emph{$n$-digit binary number} of the basis vector, as well as the position of the unity in it. Our binary numbers take, in this case, values from $\underbrace{0\ldots 0}_n {}_2$ through $\underbrace{1\ldots 1}_n {}_2$ (we can omit subscript~``2'' when no confusion is expected); note that we do not omit possible zeros at the beginning of a number.

We will also need the following slight development of this idea.

Many matrices in this paper, starting with~\eqref{new21}, and describing a spin transform, will be of sizes $2^{2n}\times 2^{2n}$, \ $n=1,2,\ldots$. Moreover, both rows and columns in such matrices, taken \emph{separately}, will correspond to a position of an $i$-spin in a $2^n\times 2^n$ face of a $2^n\times 2^n\times 2^n$ spin block, like in~\eqref{1234}, or a position of a $t$-spin in a ``ghost plane'', like in~\eqref{1234-ghost}, or there may be even a ``mixed'' case, as in~\eqref{d} below.

It turns out convenient to mark such a position in a $2^n\times 2^n$ lattice by \emph{two $n$-digit} binary numbers, each taking values in the same way as above, from $\underbrace{0\ldots 0}_{n} {}_2$ through $\underbrace{1\ldots 1}_{n} {}_2$. Considering $i$-spins on edges parallel to the first coordinate axis and intersecting a fixed plane perpendicular to them all, we can denote these numbers---``binary 2nd and 3rd coordinates''---as $b_2$ and~$b_3$, and call the pair $(b_2, b_3)$ \emph{binary address} of the corresponding $i$-spin.

Then, for a spin transform matrix whose rows correspond to $t$-spins and columns---to $i$-spins, like~\eqref{new21} or bigger matrices \eqref{4*4}, \eqref{nn} introduced below, the \emph{concatenation}~$\overline{b_2b_3}$ can be used as the binary number of its \emph{column}, taking values from $\underbrace{0\ldots 0}_{2n}$ through $\underbrace{1\ldots 1}_{2n}$. For $n=1$, we will have this way, instead of~\eqref{1234}, the following simple picture:
\begin{equation}\label{2*2}
\begin{blockarray}{ccc}
 & \scriptstyle b_3{=}0 & \scriptstyle b_3{=}1 \\
\begin{block}{c|c|c|}
 \BAhhline{&--}
\scriptstyle b_2{=}0 & 00 & 01 \\
 \BAhhline{&--}
\scriptstyle b_2{=}1 & 10 & 11 \\
 \BAhhline{&--}
\end{block}
\end{blockarray}
\end{equation}
Horizontal and vertical lines are in~\eqref{2*2}, and below in similar pictures, just for ease of perception.

Similarly, we will use \emph{binary ghost coordinates} $\alpha$ and~$\beta$ along the ghost axes of the same names, directed as in~\eqref{1234-ghost}. These will also take values from $\underbrace{0\ldots 0}_{n} {}_2$ through $\underbrace{1\ldots 1}_{n} {}_2$; concatenations~$\overline{\alpha \beta}$ can be used for numbering the \emph{rows} of a spin transform matrix, and the pair~$(\alpha, \beta)$ will be called (ghost) \emph{binary address} of a $t$-spin.

\subsection[$2^n\times 2^n\times 2^n$ blocks]{$\bm{2^n\times 2^n\times 2^n}$ blocks}\label{ss:2dn}

Below, we restrict ourself to the case of the smallest field $F=\mathbb F_2$. This will simplify, among other things, our formulas such as~\eqref{nn} below, because we will have to deal with the \emph{same} matrices $A$ and~$A^{\mathrm T}$ at every step of the $n$-step spin transform that we are going to describe, due to the fact that $\tilde A = A$ (see~\eqref{tildes}).

Consider first a $4\times 4\times 4$ block and its $4\times 4$-face perpendicular to the first axis. The analogue of binary numbering~\eqref{2*2} looks now as follows:
\begin{equation}\label{4*4-ini}
\begin{blockarray}{ccccc}
 & \scriptstyle b_3{=}00 & \scriptstyle b_3{=}01 & \scriptstyle b_3{=}10 & \scriptstyle b_3{=}11\\
\begin{block}{c|cc|cc|}
 \BAhhline{&----}
\scriptstyle b_2{=}00 & 0000 & 0001 & 0010 & 0011 \\
\scriptstyle b_2{=}01 & 0100 & 0101 & 0110 & 0111 \\
 \BAhhline{&----}
\scriptstyle b_2{=}10 & 1000 & 1001 & 1010 & 1011 \\
\scriptstyle b_2{=}11 & 1100 & 1101 & 1110 & 1111 \\
 \BAhhline{&----}
\end{block}
\end{blockarray}
\end{equation}
We divide this face into four $2\times 2$-squares, as shown in~\eqref{4*4-ini} (hopefully, it brings no trouble that the squares look like rectangles in our pictures), and, as a first step, produce transformed spins in each of these according to~\eqref{new21}. Algebraically, it means that we get new basis in the linear space of $16$-rows, consisting of the rows of matrix
\begin{equation}\label{1G1G}
 \mathbbm 1_2 \otimes \mathrm G_{13} \otimes \mathbbm 1_2 \otimes \mathrm G_{12}.
\end{equation}
The \emph{columns} of~\eqref{1G1G} correspond to $i$-spins and go in the order of 4-digit binary numbers in~\eqref{4*4-ini}; $\mathbbm 1_2$ in~\eqref{4*4-ini} is a $2\times 2$ identity matrix over~$\mathbb F_2$.

As for the \emph{rows} of~\eqref{1G1G}, there appears a \emph{mixed} binary numbering for them: they go in the order of numbers~$\overline{b_2 \alpha b_3 \beta}$, where $b_2, b_3 = 0, 1$ number the $2\times 2$ squares, while $\alpha, \beta = 0, 1$ number the positions within each $2\times 2$ square:
\begin{gather}\label{d}
\begin{blockarray}{ccccc}
 & \scriptstyle \overline{b_3 \beta}{=}00 & \scriptstyle \overline{b_3 \beta}{=}01 & \scriptstyle \overline{b_3 \beta}{=}10 & \scriptstyle \overline{b_3 \beta}{=}11\\
\begin{block}{c|cc|cc|}
 \BAhhline{&----}
\scriptstyle \overline{b_2 \alpha}{=}00 & \bm{0000} & 0001 & \bm{0010} & 0011 \\
\scriptstyle \overline{b_2 \alpha}{=}01 & 0100 & 0101 & 0110 & 0111 \\
 \BAhhline{&----}
\scriptstyle \overline{b_2 \alpha}{=}10 & \bm{1000} & 1001 & \bm{1010} & 1011 \\
\scriptstyle \overline{b_2 \alpha}{=}11 & 1100 & 1101 & 1110 & 1111 \\
 \BAhhline{&----}
\end{block}
\end{blockarray} \\*
  \begin{array}{lc}
   \text{\footnotesize direction of the 2nd axis and ghost axis $\alpha$:} & \downarrow \\
   \text{\footnotesize direction of the 3rd axis and ghost axis $\beta$:} & \rightarrow
  \end{array} \nonumber
\end{gather}

Then we do the second---and the last for the $4\times 4\times 4$ block---step: do the same transform~\eqref{new21} within every quadruple having \emph{the same ghost coordinates}, like one at the positions highlighted in bold in~\eqref{d}. Algebraically, this means applying the matrix
\begin{equation}\label{G1G1}
 \mathrm G_{13} \otimes \mathbbm 1_2 \otimes \mathrm G_{12} \otimes \mathbbm 1_2
\end{equation}
\emph{from the left} to the basis~\eqref{1G1G} obtained at the previous step (compare Important Remark~\ref{ir:f}).

The resulting transform is given by the product of \eqref{1G1G} and~\eqref{G1G1}, whose rows give the $t$-basis vectors (and $i$-spins correspond to its columns):
\begin{equation}\label{4*4}
\mathrm G_{13} \otimes \mathrm G_{13} \otimes \mathrm G_{12} \otimes \mathrm G_{12}.
\end{equation}

In the general case of a $2^n\times 2^n\times 2^n$ block orthogonal to the first coordinate axis, the algebraic description of the $n$ steps of our spin transform is done in a direct analogy with the $4\times 4\times 4$ case considered above: the $k$-th step consists in multiplying the previously obtained matrix by
\begin{equation*}
\underbrace{ \mathbbm 1_2 \otimes \cdots \otimes \mathbbm 1_2 }_{n-k} \otimes \: \mathrm G_{13} \otimes \underbrace{ \mathbbm 1_2 \otimes \cdots \otimes \mathbbm 1_2 }_{k-1}
 \;\; \otimes \;\; \underbrace{ \mathbbm 1_2 \otimes \cdots \otimes \mathbbm 1_2 }_{n-k} \otimes \: \mathrm G_{12} \otimes \underbrace{ \mathbbm 1_2 \otimes \cdots \otimes \mathbbm 1_2 }_{k-1} .
\end{equation*}
The final result is the following matrix
\begin{equation}\label{nn}
\mathbf G = \underbrace{\mathrm G_{13} \otimes \cdots \otimes \mathrm G_{13}}_n \: \otimes \: \underbrace{\mathrm G_{12} \otimes \cdots \otimes \mathrm G_{12}}_n = \mathrm G_{13}^{\otimes n} \otimes \mathrm G_{12}^{\otimes n}
\end{equation}
whose rows are the $t$-basis vectors.

\subsection{Successive spin transforms in pictures}\label{ss:tpic}

This way, we have organized new basis vectors in the $\mathbb F_2$-linear space spanned by the initial spins on the face of a cubic block perpendicular to the first coordinate axis---which we call transformed spins---in the \emph{same $2^n\times 2^n$ lattice} as we have for the initial basis vectors. We now draw some pictures, in which a \emph{square} (each of the smallest squares in each picture) corresponds to each component of an input vector in the relevant basis. The reader can think that the actual lattice points to which matrices~$A$ or~$A^{\mathrm T}$ belong lie below the centers of the corresponding squares.

First, we divide the $2^n\times 2^n$ input of a $2^n\times 2^n\times 2^n$ block, perpendicular to axis~1, in $2\times 2$ squares, and do the first step---transform to basis~\eqref{new21}---in each such square. We depict this graphically as in Figure~\ref{f:222}, showing now the relevant matrices $A$ or~$A^{\mathrm T}$ according to the fact that $A$ belongs to the three first rows of~\eqref{new21}, while~$A^{\mathrm T}$ belongs to its fourth row~\eqref{21t}.

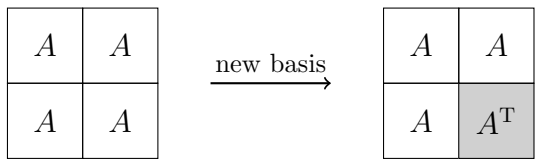
\begin{figure}
\centering
\begin{tikzpicture}
\draw (0,0) rectangle (1,1);   \draw (0.5,0.5) node[anchor=center] {$A$} ;
\draw (0,1) rectangle (1,2);   \draw (0.5,1.5) node[anchor=center] {$A$} ;
\draw (1,0) rectangle (2,1);   \draw (1.5,0.5) node[anchor=center] {$A$} ;
\draw (1,1) rectangle (2,2);   \draw (1.5,1.5) node[anchor=center] {$A$} ;
\draw[ thick, -> ] (2.7,1) -- (4.3,1);   \draw (3.5,1) node[anchor=south] {\footnotesize new basis} ;
\draw (5,0) rectangle (6,1);   \draw (5.5,0.5) node[anchor=center] {$A$} ;
\draw (5,1) rectangle (6,2);   \draw (5.5,1.5) node[anchor=center] {$A$} ;
\draw[ fill=gray!30 ] (6,0) rectangle (7,1);   \draw (6.5,0.5) node[anchor=center] {$A^{\mathrm{T}}$} ;
\draw (6,1) rectangle (7,2);   \draw (6.5,1.5) node[anchor=center] {$A$} ;
\end{tikzpicture}
\caption{LHS: Input of a $2\times 2\times 2$ block, perpendicular to axis~1. Directions of axes 2 and~3 are $\downarrow$ and~$\rightarrow$, respectively. RHS: Four separate inputs after switching to basis~\eqref{new21}. Here $\downarrow$ and~$\rightarrow$ are directions of the ghost axes $\alpha$ and~$\beta$}
\label{f:222}
\end{figure}

Then we perform more similar steps. Note that, after the first step (as well as after more successive steps), the four input vector components in any quadruple that has to undergo our transform do not lie back-to-back in our pictures. So, the transform of such a quadruple can be depicted as in Figure~\ref{f:sep}.

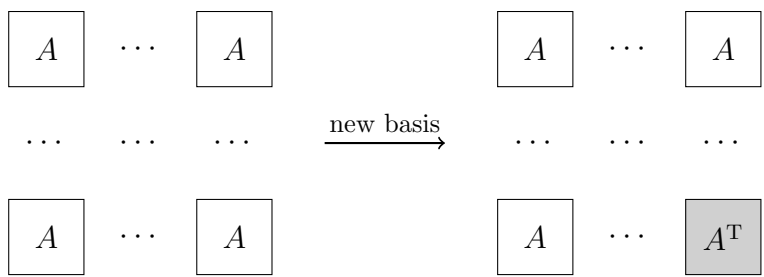
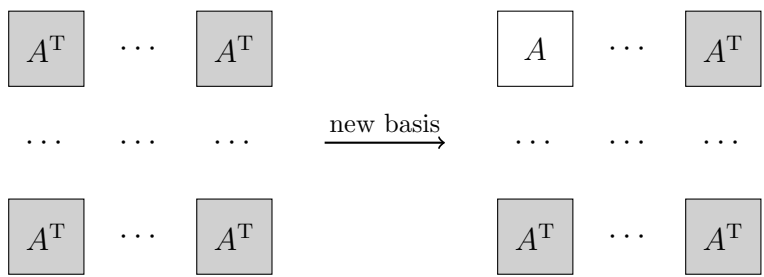
\begin{figure}
% \ContinuedFloat*
\centering
\begin{subfigure}[b]{\textwidth}
\centering
\begin{tikzpicture}
\draw (0,0) rectangle (1,1);   \draw (0.5,0.5) node[anchor=center] {$A$} ;
\draw (0,2.5) rectangle (1,3.5);   \draw (0.5,3) node[anchor=center] {$A$} ;
\draw (2.5,0) rectangle (3.5,1);   \draw (3,0.5) node[anchor=center] {$A$} ;
\draw (2.5,2.5) rectangle (3.5,3.5);   \draw (3,3) node[anchor=center] {$A$} ;
\draw[ thick, -> ] (4.2,1.75) -- (5.8,1.75);   \draw (5,1.75) node[anchor=south] {\footnotesize new basis} ;
\draw (1.5+5,0) rectangle (1.5+6,1);   \draw (7,0.5) node[anchor=center] {$A$} ;
\draw (1.5+5,2.5) rectangle (1.5+6,3.5);   \draw (7,3) node[anchor=center] {$A$} ;
\draw[ fill=gray!30 ] (3+6,0) rectangle (3+7,1);   \draw (9.5,0.5) node[anchor=center] {$A^{\mathrm{T}}$} ;
\draw (3+6,2.5) rectangle (3+7,3.5);   \draw (9.5,3) node[anchor=center] {$A$} ;
\draw (0.5,1.75) node {$\cdots$} ;
\draw (1.75,0.5) node {$\cdots$} ;
\draw (1.75,1.75) node {$\cdots$} ;
\draw (1.75,3) node {$\cdots$} ;
\draw (3,1.75) node {$\cdots$} ;
\draw (6.5+0.5,1.75) node {$\cdots$} ;
\draw (6.5+1.75,0.5) node {$\cdots$} ;
\draw (6.5+1.75,1.75) node {$\cdots$} ;
\draw (6.5+1.75,3) node {$\cdots$} ;
\draw (6.5+3,1.75) node {$\cdots$} ;
\end{tikzpicture}
 \caption{Case of matrices~$A$}
 \label{f:sepA}
\end{subfigure}

\bigskip
\bigskip
\bigskip

%\end{figure}
%\begin{figure}
% \ContinuedFloat
%\centering
\begin{subfigure}[b]{\textwidth}
\centering
\begin{tikzpicture}
\draw[ fill=gray!30 ] (0,0) rectangle (1,1);   \draw (0.5,0.5) node[anchor=center] {$A^{\mathrm{T}}$} ;
\draw[ fill=gray!30 ] (0,2.5) rectangle (1,3.5);   \draw (0.5,3) node[anchor=center] {$A^{\mathrm{T}}$} ;
\draw[ fill=gray!30 ] (2.5,0) rectangle (3.5,1);   \draw (3,0.5) node[anchor=center] {$A^{\mathrm{T}}$} ;
\draw[ fill=gray!30 ] (2.5,2.5) rectangle (3.5,3.5);   \draw (3,3) node[anchor=center] {$A^{\mathrm{T}}$} ;
\draw[ thick, -> ] (4.2,1.75) -- (5.8,1.75);   \draw (5,1.75) node[anchor=south] {\footnotesize new basis} ;
\draw[ fill=gray!30 ] (1.5+5,0) rectangle (1.5+6,1);   \draw (7,0.5) node[anchor=center] {$A^{\mathrm{T}}$} ;
\draw (1.5+5,2.5) rectangle (1.5+6,3.5);   \draw (7,3) node[anchor=center] {$A$} ;
\draw[ fill=gray!30 ] (3+6,0) rectangle (3+7,1);   \draw (9.5,0.5) node[anchor=center] {$A^{\mathrm{T}}$} ;
\draw[ fill=gray!30 ] (3+6,2.5) rectangle (3+7,3.5);   \draw (9.5,3) node[anchor=center] {$A^{\mathrm{T}}$} ;
\draw (0.5,1.75) node {$\cdots$} ;
\draw (1.75,0.5) node {$\cdots$} ;
\draw (1.75,1.75) node {$\cdots$} ;
\draw (1.75,3) node {$\cdots$} ;
\draw (3,1.75) node {$\cdots$} ;
\draw (6.5+0.5,1.75) node {$\cdots$} ;
\draw (6.5+1.75,0.5) node {$\cdots$} ;
\draw (6.5+1.75,1.75) node {$\cdots$} ;
\draw (6.5+1.75,3) node {$\cdots$} ;
\draw (6.5+3,1.75) node {$\cdots$} ;
\end{tikzpicture}
 \caption{Case of matrices~$A^{\mathrm T}$}
 \label{f:sepAT}
\end{subfigure}
 \caption{General pictorial representation for a transform of four inputs}
 \label{f:sep}
\end{figure}

The second step for a $4\times 4$ square is depicted in Figure~\ref{f:44}, where the spins belonging to one quadruple of the type of Figure~\ref{f:sepA} are circled in orange.

\begin{figure}
\centering
\begin{tikzpicture}
\filldraw[gray!50] (1,0) rectangle (2,1) ;
\filldraw[gray!50] (3,0) rectangle (4,1) ;
\filldraw[gray!50] (1,2) rectangle (2,3) ;
\filldraw[gray!50] (3,2) rectangle (4,3) ;
\draw[thick] (0,0) -- (4,0);
\draw (0,1) -- (4,1);
\draw[thick] (0,2) -- (4,2);
\draw (0,3) -- (4,3);
\draw[thick] (0,4) -- (4,4);
\draw[thick] (0,0) -- (0,4);
\draw (1,0) -- (1,4);
\draw[thick] (2,0) -- (2,4);
\draw (3,0) -- (3,4);
\draw[thick] (4,0) -- (4,4);
\draw[ thick, -> ] (4.7,2) -- (6.3,2);   \draw (5.5,2) node[anchor=south] {\footnotesize new basis} ;
\filldraw[gray!50] (7+1,0) rectangle (7+2,1) ;
\filldraw[gray!50] (7+2,0) rectangle (7+3,1) ;
\filldraw[gray!50] (7+3,0) rectangle (7+4,1) ;
\filldraw[gray!50] (7+2,1) rectangle (7+3,2) ;
\filldraw[gray!50] (7+3,1) rectangle (7+4,2) ;
\filldraw[gray!50] (7+3,2) rectangle (7+4,3) ;
\draw[thick] (7+0,0) -- (7+4,0);
\draw (7+0,1) -- (7+4,1);
\draw[thick] (7+0,2) -- (7+4,2);
\draw (7+0,3) -- (7+4,3);
\draw[thick] (7+0,4) -- (7+4,4);
\draw[thick] (7+0,0) -- (7+0,4);
\draw (7+1,0) -- (7+1,4);
\draw[thick] (7+2,0) -- (7+2,4);
\draw (7+3,0) -- (7+3,4);
\draw[thick] (7+4,0) -- (7+4,4);
\draw[thick,orange] (0,1) rectangle (1,2) ;
\draw[thick,orange] (0,3) rectangle (1,4) ;
\draw[thick,orange] (2,1) rectangle (3,2) ;
\draw[thick,orange] (2,3) rectangle (3,4) ;
\draw[thick,orange] (7+0,1) rectangle (7+1,2) ;
\draw[thick,orange] (7+0,3) rectangle (7+1,4) ;
\draw[thick,orange] (7+2,1) rectangle (7+3,2) ;
\draw[thick,orange] (7+2,3) rectangle (7+3,4) ;
\end{tikzpicture}
\caption{The second step of transforms in a $4\times 4$ square}
\label{f:44}
\end{figure}
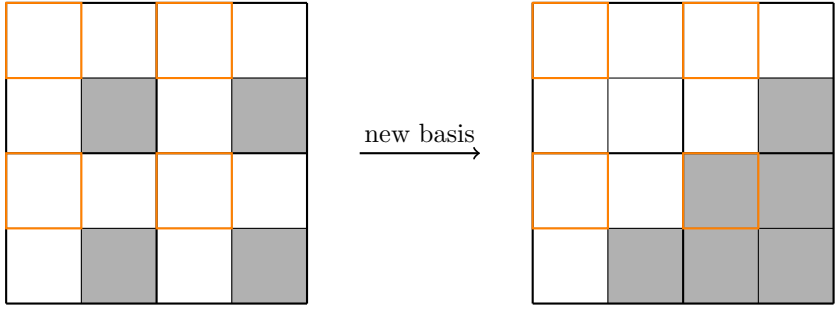

\subsection[Which $t$-spins belong to $A$ and which to~$A^{\mathrm T}$ for a general $n$]{Which $\bm t$-spins belong to $\bm A$ and which to~$\bm {A^{\mathrm T}}$ for a general $\bm n$}\label{ss:T}

We number the rows of both $\mathrm G_{13}^{\otimes n}$ and~$\mathrm G_{12}^{\otimes n}$ in~\eqref{nn} by binary numbers from ${\underbrace{0\ldots 0}_{n}}{\,}_2$ through ${\underbrace{1\ldots 1}_{n}}{\,}_2$, as described in Subsection~\ref{ss:bin}. We denote these respective binary numbers as $\alpha$ and~$\beta$, in accordance with the axes names in~\eqref{1234-ghost}.

\begin{proposition}\label{p:b}
If
\begin{equation}\label{ph}
\alpha + \beta \ge 1 \hspace{0.07em} {\underbrace{0\ldots 0}_{n}}{\,}_2 = 2^n_{10},
\end{equation}
then the corresponding row of matrix~$\mathbf G$~\eqref{nn} ($t$-basis vector) belongs to~$A^{\mathrm T}$; otherwise, it belongs to~$A$.
\end{proposition}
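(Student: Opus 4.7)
My plan is to prove this by induction on the step $k$ of the transform, showing that the $\{A,A^{\mathrm T}\}$-valued ``type'' of each intermediate row is exactly the binary-addition carry bit of $\alpha + \beta$; \eqref{ph} then falls out at $k=n$. According to Subsection~\ref{ss:2dn}, the $k$-th step inserts $\mathrm G_{13}$ at the $(n-k+1)$-th position from the left in the $\alpha$-block and $\mathrm G_{12}$ analogously in the $\beta$-block. So if I write
\[
\alpha = \sum_{k=1}^{n} \alpha_k\,2^{k-1}, \qquad \beta = \sum_{k=1}^{n} \beta_k\,2^{k-1},
\]
the bit pair $(\alpha_k,\beta_k)$ is precisely what is being set at step $k$. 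Any step-$k$ quadruple consists of four vertices differing only in those two transform positions, so they agree on all already-fixed digits $(\alpha_j,\beta_j)_{j<k}$; hence, once I have established the hypothesis that the type $T_{k-1}$ depends only on those digits, all four share a common type and may be transformed homogeneously.

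For the transition rule at step $k$ I distinguish the two cases. When $T_{k-1} = A$, Proposition~\ref{p:t} combined with~\eqref{21t} pinpoints $(\alpha_k,\beta_k) = (1,1)$ as the row living in $W_4$, so that row becomes $A^{\mathrm T}$-type while the other three stay $A$-type. When $T_{k-1} = A^{\mathrm T}$, the key remark at the end of Subsubsection~\ref{sss:a1}---that the same $\mathrm G_{13}\otimes\mathrm G_{12}$ serves as $t$-basis under $A \leftrightarrow A^{\mathrm T}$, only with rows in reversed order---tells me that the distinguished row is now $(0,0)$, and its carrier is the space where $A$ (not $A^{\mathrm T}$) acts. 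Summarising, $T_k \neq T_{k-1}$ holds exactly when $T_{k-1}=A$ with $(\alpha_k,\beta_k) = (1,1)$, or $T_{k-1}=A^{\mathrm T}$ with $(\alpha_k,\beta_k) = (0,0)$.

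Now let $c_k$ denote the carry out of bit position $k$ in the binary sum $\alpha + \beta$, with $c_0 = 0$. The standard carry recursion $c_k = \mathrm{maj}(\alpha_k, \beta_k, c_{k-1})$ reads $c_k = \alpha_k\beta_k$ when $c_{k-1}=0$ and $c_k = \alpha_k \lor \beta_k$ when $c_{k-1}=1$. Comparing with the above transition rule, the equivalence $T_k = A^{\mathrm T} \Leftrightarrow c_k = 1$ is preserved along the induction; starting from the base case $T_0 = A$, $c_0 = 0$, I conclude $T_n = A^{\mathrm T} \iff c_n = 1 \iff \alpha + \beta \geq 2^n$, which is precisely~\eqref{ph}. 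The only point where I expect to have to tread carefully is the $A^{\mathrm T}$-type transition: one must track exactly how the reversed-order symmetry from Subsubsection~\ref{sss:a1} moves the distinguished row to $(0,0)$ and swaps its carrier between $W_1$-type and $W_4$-type; everything else is bookkeeping.
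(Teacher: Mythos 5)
Your proof is correct and takes essentially the same route as the paper: an induction over the $n$ transform steps, using that each step's quadruple shares a common type (because the type depends only on the ghost digits already set) and that the distinguished row sits at $(1,1)$ for an $A$-quadruple but at $(0,0)$ for an $A^{\mathrm T}$-quadruple, by the reversed-order remark of Subsubsection~\ref{sss:a1}. Your invariant $T_k=A^{\mathrm T}\Leftrightarrow c_k=1$ is precisely the paper's condition $\alpha+\beta\ge 2^k$ on the digits processed so far, so the carry recursion is just a tidy repackaging of the paper's case checks \eqref{pd} and~\eqref{pg}.
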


For $n=2$, the squares corresponding to~$A^{\mathrm T}$ are filled in with gray in the rhs of~Figure~\ref{f:44}.

\begin{proof}
Induction in~$n$.

Induction base $n=1$ is evident.

Consider induction step from~$n$ to~$n+1$. Suppose we have an address~$(\alpha, \beta)$ with $n$-digit binary $\alpha$ and~$\beta$, that is,
\begin{equation}\label{pa}
 \underbrace{0\ldots 0}_n \le \alpha, \beta < 1\underbrace{0\ldots 0}_n,
\end{equation}
as we like to write it in this context.

Let, moreover,
\begin{equation}\label{pb}
 \alpha + \beta < 1\underbrace{0\ldots 0}_n.
\end{equation}
We have then~$A$ at address~$(\alpha, \beta)$, due to the induction hypothesis.

Doing the spin transform step, we first have four \emph{combined} addresses (in the same sense as in~\eqref{d}), namely, 
\begin{equation}\label{pc}
 \begin{matrix}
  (\overline{0\alpha}, \: \overline{0\beta}) & \ldots & (\overline{0\alpha}, \: \overline{1\beta}) \\
              \vdots                      &        &               \vdots                    \\
  (\overline{1\alpha}, \: \overline{0\beta}) & \ldots & (\overline{1\alpha}, \: \overline{1\beta})
 \end{matrix} \; ,
\end{equation}
where $0$'s and~$1$'s belong to coordinate axes $x_2$ or~$x_3$.

Then, ``pure'' $t$-spin addresses appear at the same places. Treating the addresses in~\eqref{pc} as such pure addresses, and taking \eqref{pa} and~\eqref{pb} into account, we see that
\begin{equation}\label{pd}
 \begin{aligned}
  \overline{0\alpha} + \overline{0\beta} < 1\underbrace{0\ldots 0}_{n+1}, & \qquad
  \overline{0\alpha} + \overline{1\beta} < 1\underbrace{0\ldots 0}_{n+1}, \\
  \overline{1\alpha} + \overline{0\beta} < 1\underbrace{0\ldots 0}_{n+1}, & \quad \text{but} \quad
  \overline{1\alpha} + \overline{1\beta} \ge 1\underbrace{0\ldots 0}_{n+1}.
 \end{aligned}
\end{equation}
Inequalities~\eqref{pd} mean that three~$A$'s and one~$A^{\mathrm T}$ appear exactly where desired.

If, on the contrary, we begin the induction step with
\begin{equation}\label{pf}
 \alpha + \beta \ge 1\underbrace{0\ldots 0}_n,
\end{equation}
that is, address $(\alpha, \beta)$ belonging to~$A^{\mathrm T}$, then it follows from \eqref{pa} and~\eqref{pf} that
\begin{equation}\label{pg}
 \begin{aligned}
  \overline{0\alpha} + \overline{0\beta} < 1\underbrace{0\ldots 0}_{n+1}, & \quad \text{but} \quad
  \overline{0\alpha} + \overline{1\beta} \ge 1\underbrace{0\ldots 0}_{n+1}, \\
  \overline{1\alpha} + \overline{0\beta} \ge 1\underbrace{0\ldots 0}_{n+1}, & \qquad
  \overline{1\alpha} + \overline{1\beta} \ge 1\underbrace{0\ldots 0}_{n+1}.
 \end{aligned}
\end{equation}
Inequalities~\eqref{pg} mean, again, that the matrices appear where desired, and this time they are one~$A$ and three~$A^{\mathrm T}$\,'s.
\end{proof}

In terms of our pictures, the squares filled with gray in the rhs of Figure~\ref{f:44} correspond to $t$-spin addresses satisfying~\eqref{ph} for $n=2$.

\subsection{Inverting the spin transform}\label{ss:ti}

It turns out that condition~\eqref{Delta} ensures the \emph{invertibility} of our spin transform. Moreover, the inversion is easily done explicitly.

The inverse of matrix~$\mathrm G_{ij}$~\eqref{sfA} looks as follows:
\begin{equation*}
\mathrm B_{ij} \stackrel{\mathrm{def}}{=} \mathrm G_{ij}^{-1} = \frac{1}{\Delta} \begin{pmatrix} m_{ij} & m_{ji} \\ a_{ji} & a_{ij} \end{pmatrix},
\end{equation*}
where $\Delta$ from~\eqref{Delta} appears because
\begin{equation*}
\Delta = \det \mathrm G_{ij}
\end{equation*}
for any $1\le i,j\le 3$, \ $i\ne j$.

Hence, the inverse of matrix~$\mathbf G$~\eqref{nn} is
\begin{equation}\label{BB}
\mathbf B \stackrel{\mathrm{def}}{=} \mathrm G^{-1} = \underbrace{\mathrm B_{13} \otimes \cdots \otimes \mathrm B_{13}}_n \: \otimes \: \underbrace{\mathrm B_{12} \otimes \cdots \otimes \mathrm B_{12}}_n = \mathrm B_{13}^{\otimes n} \otimes \mathrm B_{12}^{\otimes n} .
\end{equation}
Matrix~$\mathbf B$ transforms, of course, a row of $i$-spins into the corresponding row of $t$-spins. Interestingly, however, it will actually be of use in this paper in a somewhat different situation, namely when it acts on \emph{column} vectors, in the proof of Theorem~\ref{th:theorem} below! The invertibility of~$\mathbf G$ will also be important for the proof of Proposition~\ref{p:duals}.

\section{Spin correlations: Generalities}\label{s:sc}

\subsection{Spin configurations and probabilities}\label{ss:cp}

A permitted spin configuration on a $2^n\times 2^n\times 2^n$ block is determined by $3\times 2^n\times 2^n$ \emph{input} spins on its three input planes, orthogonal to the coordinate axes 1, 2 and~3. Moreover, we can impose some \emph{boundary conditions} on spin configurations; in this case, we use the word ``permitted'' for the configurations that are compatible with these conditions. Specifically, in this paper, we will work with \emph{triply-periodic} conditions: an input edge is identified with an output edge if they both lie on the same line parallel to one of the coordinate axes. We also call these conditions simply \emph{cyclic}. We assume (as we already said in the \hyperref[s:i]{Introduction}) that any permitted spin configuration has Boltzmann weight~$1\in \mathbb R$, while all other configurations have Boltzmann weight~$0\in \mathbb R$.

We will be able to calculate the probability of a ``random event'' such as ``some chosen spins have some chosen values'' if we manage to express this event in terms of $t$-spins. This is because of the following proposition.

\begin{proposition}\label{p:i}
If cyclic boundary conditions are imposed on a $2^n\times 2^n\times 2^n$ block, then $t$-spins are \emph{independent random variables}.
\end{proposition}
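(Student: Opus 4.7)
The plan is to recast the triply-periodic constraint as a left fixed-point equation on the row of all input $i$-spins, to block-diagonalise the resulting linear operator via the $t$-basis, and then to read independence off the product structure of the kernel.

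\emph{Step~1 (the cyclic condition as a fixed-point equation).} I would assemble all input $i$-spins into one row $\mathbf v \in \mathsf V_1\oplus\mathsf V_2\oplus\mathsf V_3$ and let $\mathcal P$ denote the block as a linear operator acting on such rows from the right. The cyclic identification says exactly that each output spin equals its input counterpart along the same coordinate line, so a configuration is permitted iff $\mathbf v\mathcal P=\mathbf v$. Hence the set of permitted configurations is the $\mathbb F_2$-subspace $\ker(\mathcal P-\mathbbm 1)$, and our combinatorial probability is the uniform distribution on it.

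\emph{Step~2 (block-diagonalisation in the $t$-basis).} Iterating Proposition~\ref{p:t} $n$ times, in the manner of Subsubsection~\ref{sss:b}, and using that $\tilde A=A$ over~$\mathbb F_2$, I would write $\mathcal P$ in the $t$-basis as the direct sum $\bigoplus_j M_j$ over the three-dimensional subspaces $W_j$ coming from the iterated decomposition~\eqref{VVV}, with each $M_j\in\{A,A^{\mathrm T}\}$ distributed as prescribed by Proposition~\ref{p:b} and its analogues for the other two axes. Since kernels commute with direct sums,
\begin{equation*}
 \ker(\mathcal P-\mathbbm 1) \;=\; \bigoplus_{j} \ker(M_j-\mathbbm 1),
\end{equation*}
so, in $t$-coordinates, the set of permitted configurations is the Cartesian product of the fixed-point subspaces of the~$M_j$.

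\emph{Step~3 (product measure $=$ independence).} The uniform measure on a Cartesian product of finite sets is the product of uniform measures on the factors. Applied to the decomposition above, this immediately yields that the $t$-spins attached to different $W_j$'s are independent random variables, which is the content of the proposition and is precisely what is needed for the Fourier-style correlation calculations of the subsequent sections.

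\emph{Main obstacle.} The only step that really requires care is Step~1: because the $t$-basis has been constructed on the input face of the block, one must check that the ``input $=$ output'' identification underlying the cyclic condition retains an unambiguous meaning after the basis change, so that the equation $\mathbf v\mathcal P=\mathbf v$ can be written directly in $t$-coordinates. This is guaranteed by the fact that the basis-change matrix $\mathbf G$ of~\eqref{nn} depends only on~$A$ and not on any input/output labelling, together with its invertibility established in Subsection~\ref{ss:ti}: both faces therefore carry the same $t$-basis. Once this is in place, Steps~2 and~3 are formal.
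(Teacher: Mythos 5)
Your argument is correct and is essentially the paper's own proof: the paper likewise observes that, after the basis change (performed by the same invertible matrix on each input face and its output counterpart), the cyclic condition splits into the separate requirements that each block's value lie in the eigenvalue-$1$ eigenspace of $A$ or $A^{\mathrm T}$, and then factors the uniform classical probability over these blocks. Your formulation via $\ker(\mathcal P-\mathbbm 1)=\bigoplus_j\ker(M_j-\mathbbm 1)$ and your closing remark about $\mathbf G$ being face-independent are just slightly more formal phrasings of exactly those two points.
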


\begin{proof}
Cyclic boundary conditions for $i$-spins remain the same cyclic boundary conditions after passing to $t$-spins, because the transition to $t$-spins is given by the same invertible $\mathbb F_2$-linear operators on each input face and its output counterpart. But cyclic boundary conditions act on each $t$-spin \emph{separately}: they mean that \emph{its} input must coincide with \emph{its} output (hence, both the input and output must lie in the \emph{eigenspace} of the corresponding operator $A$ or~$A^{\mathrm T}$, with the eigenvalue~$1\in \mathbb F_2$. We discuss it in more detail in Subsection~\ref{ss:pe}).

Our ``classical'' probability of the event ``all $t$-spins have taken some prescribed values'' is the ratio
\begin{equation}\label{pv}
\frac{\text{\small number of permitted $t$-spin configurations with the prescribed values}}{\text{\small number of all permitted $t$-spin configurations}},
\end{equation}
The numerator in~\eqref{pv} is~$1$ if all the prescribed values are actually permitted for all $t$-spins, and~$0$ otherwise. It is hence the product of the corresponding zeros and ones for the individual $t$-spins. The denominator is a product as well---the product of the numbers of permitted configurations for individual $t$-spins (which are the cardinalities of the eigenspaces mentioned above in parentheses). Hence, the whole probability~\eqref{pv} is the product of the probabilities for individual $t$-spins.
\end{proof}

\begin{remark}
There are also some other kinds of boundary conditions that retain their form after passing to $t$-spins, such as:
 \begin{itemize}\itemsep 0pt
  \item all spins on a given face of the block are zero,
  \item all spins on a given face are free.
 \end{itemize}
\end{remark}

We will be considering the probabilities of the events ``spins on $k$~given edges have taken zero (\,$0 \in \mathbb F_2$\,) values'', where $k=2,3$ or~$4$, for a $2^n\times 2^n\times 2^n$ block and input (\,=\,output, because of the cyclic boundary conditions) edges parallel to the first coordinate axis.
These $k$ zero conditions specify a hyperplane of codimension~$k$ in the vector space $\mathbb F_2^{3\times 4^n}$ of all input spin configurations (having, as we remember, Boltzmann weights either $0$ or~$1\in \mathbb R$). A big sum over this hyperplane becomes, however, up to a normalization factor, just a sum over a $k$-dimensional $\mathbb F_2$-plane if we introduce into play a \emph{multidimensional discrete Fourier transform} for $\mathbb R$-valued functions on an $\mathbb F_2$-linear space~$\mathbb F_2^{3\times 4^n}$, as specified in Proposition~\ref{p:sum-kw} in the next Subsection~\ref{ss:F}.

\subsection{Multidimensional discrete Fourier transform}\label{ss:F}

We will be using multidimensional discrete Fourier transform for $\mathbb R$-valued functions on a finite-dimen\-sional $\mathbb F_2$-linear space~$L$ in the following form. We write the elements of~$L$ as \emph{row} vectors, and denote them as~$\vec x$. For an $\mathbb R$-valued function~$f$ on~$L$, its Fourier transform is the following $\mathbb R$-valued function on the \emph{dual} space $L^*$ consisting of \emph{column} vectors~$\vec w$:
\begin{equation}\label{Fourier}
F(\vec w) \stackrel{\mathrm{def}}{=} \sum_{\vec x \in L} f(\vec x) (-1)^{\vec x \vec w}.
\end{equation}
where $\vec x \vec w \in \mathbb F_2$ is the usual product of a row and a column; $(-1)^{\vec x \vec w}$ is a well defined real number $\pm 1$.

We will be using the properties of transform~\eqref{Fourier} formulated below as Propositions \ref{p:sum-kw} and~\ref{p:prod}.

\begin{proposition}\label{p:sum-kw}
Sum of values~$f(\vec x)$ for all $\vec x$ \emph{orthogonal} to a $k$-dimen\-sional $\mathbb F_2$-linear subspace~$K \subset L^*$ of columns~$\vec w$ is expressed by means of transform~\eqref{Fourier} as follows:
 \begin{equation}\label{sum-kw}
  \sum_{\vec x \; \perp \; K} f(\vec x) = \frac{\sum _{\vec w\in K} F(\vec w)}{2^k}.
 \end{equation}
\end{proposition}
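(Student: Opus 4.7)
The plan is to substitute the definition~\eqref{Fourier} of the Fourier transform into the right-hand side of~\eqref{sum-kw} and interchange the two summations. This will express $\sum_{\vec w \in K} F(\vec w)$ as $\sum_{\vec x \in L} f(\vec x)\, S(\vec x)$, where $S(\vec x) := \sum_{\vec w \in K} (-1)^{\vec x \vec w}$. The entire content of the proposition then reduces to evaluating the inner sum $S(\vec x)$.

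Next, I would argue that $S(\vec x)$ takes only two values, $2^k$ or~$0$, by a standard character-orthogonality argument. The map $\vec w \mapsto \vec x \vec w$ is an $\mathbb F_2$-linear functional on $K$. It is either identically zero---precisely when $\vec x \perp K$---in which case every summand equals $+1$ and $S(\vec x) = |K| = 2^k$; or it is a nonzero linear functional, in which case its kernel is a codimension-one subspace of $K$ of size~$2^{k-1}$, so the values $+1$ and $-1$ are attained equally often in the sum and $S(\vec x) = 0$.

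Combining the two cases, only the terms with $\vec x \perp K$ survive in $\sum_{\vec x \in L} f(\vec x)\, S(\vec x)$, and each of them contributes exactly $2^k f(\vec x)$. Dividing by $2^k$ yields~\eqref{sum-kw}. I do not anticipate any real obstacle here; this is essentially Plancherel/Poisson summation for the finite $\mathbb F_2$-vector group~$L$, and the only care needed is to keep the row/column convention of~\eqref{Fourier} straight when speaking about orthogonality between vectors in~$L$ and in~$L^*$.
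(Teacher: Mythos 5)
Your proposal is correct and follows essentially the same route as the paper: both interchange the two sums and evaluate the character sum $\sum_{\vec w\in K}(-1)^{\vec x\vec w}$, which equals $2^k$ when $\vec x\perp K$ and $0$ otherwise. The only cosmetic difference is that the paper proves the vanishing by pairing $\vec w$ with $\vec w+\vec a$ for some $\vec a\in K$ with $(-1)^{\vec x\vec a}=-1$, while you observe that the nonzero functional $\vec w\mapsto\vec x\vec w$ has a kernel of index two---two phrasings of the same fact.
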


\begin{proof}
Indeed, for a given~$\vec x$, the coefficient at~$f(\vec x)$ in $\sum _{\vec w\in K} F(\vec w)$ is, after substituting~\eqref{Fourier},
\begin{equation}\label{K}
\sum _{\vec w\in K} (-1)^{\vec x \vec w}.
\end{equation}
For $\vec x \perp K$, this is the sum of unities over~$K$, that is,~$2^k$.

If, on the contrary, there is a vector $\vec a\in K$ such that $(-1)^{\vec x \vec a} = -1$, then $K$ splits into pairs of the type $(\vec w, \vec w + \vec a)$, and for each such pair $(-1)^{\vec x \vec w} + (-1)^{\vec x (\vec w + \vec a)} = 0$, hence the whole sum~\eqref{K} is~$0$.
\end{proof}

It may make sense to write out a few particular cases of~\eqref{sum-kw}:
\begin{itemize}\itemsep 0pt
 \item sum of all values~$f(\vec x)$:
  \begin{equation}\label{sum-all}
   \sum_{\vec x \in \mathbb L} f(\vec x) = F(\vec 0),
  \end{equation}
 \item sum of values~$f(\vec x)$ for all $\vec x$ orthogonal to a nonzero column~$\vec w$:
  \begin{equation}\label{sum-1w}
   \sum_{\vec x \perp \vec w} f(\vec x) = \frac{F(\vec 0) + F(\vec w)}{2},
  \end{equation}
 \item sum of values~$f(\vec x)$ for all $\vec x$ orthogonal to two linearly independent columns $\vec w_1$ and~$\vec w_2$:
  \begin{equation}\label{sum-2w}
   \sum_{\vec x \; \perp \; \vec w_1 \mathrm{\, and \,} \vec w_2} f(\vec x) = \frac{F(\vec 0) + F(\vec w_1) + F(\vec w_2) + F(\vec w_1 + \vec w_2)}{4}.
  \end{equation}
\end{itemize}

\begin{proposition}\label{p:prod}
Let $L$ be represented as a \emph{direct sum}: $L=\bigoplus_{i=1}^k L_i$. For a vector $\vec x\in L$, we write as $\vec x = \sum _{i=1}^k \vec x_i$, \ $\vec x_i \in L_i$, its corresponding decomposition into a sum. Let $f$ be the product of functions~$\varphi _i$ defined each on its space~$L_i$:
 \begin{equation}\label{fd}
   f(\vec x) = \varphi _1(\vec x_1) \cdots \varphi _k(\vec x_k).
 \end{equation}
  Then, the Fourier transform~$F$ of\/~$f$ is also a product:
 \begin{equation}\label{Fp}
   F(\vec w) = \Phi _1(\vec w_1) \cdots \Phi _k(\vec w_k),
 \end{equation}
  where the decomposition $\vec w = \vec w_1 + \dots + \vec w_k$ corresponds to the \emph{dual decomposition} of the dual space: $L^*=M_1 \oplus \dots \oplus M_k$, each $M_i$ being the space orthogonal to all~$L_j$ with $j\ne i$, and
 \begin{equation}\label{Phi_i}
   \Phi _i(\vec w_i) = \sum_{\vec x_i \in L_i} \varphi _i (\vec x_i) (-1)^{\vec x_i \vec w_i}.
 \end{equation}
\end{proposition}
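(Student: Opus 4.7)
The proof is essentially the standard Fubini-style factorization of a sum of products into a product of sums, once the dual decomposition is set up correctly. The plan has two parts: first establish the compatibility of the pairing with the direct-sum decomposition, and then substitute into the defining formula~\eqref{Fourier}.

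First I would verify that the decomposition $L^* = M_1 \oplus \cdots \oplus M_k$ is well posed and that the bilinear pairing between~$L$ and~$L^*$ splits accordingly. Since $M_i$ is defined as the annihilator of $\bigoplus_{j \ne i} L_j$, any functional on~$L$ decomposes uniquely as $\vec w = \vec w_1 + \cdots + \vec w_k$ with $\vec w_i \in M_i$ (choose $\vec w_i$ to agree with $\vec w$ on~$L_i$ and vanish on the other summands). The key algebraic identity that falls out is
\begin{equation*}
\vec x \, \vec w \;=\; \sum_{i,j} \vec x_i \vec w_j \;=\; \sum_{i=1}^k \vec x_i \vec w_i \qquad \text{in } \mathbb F_2,
\end{equation*}
because $\vec x_i \vec w_j = 0$ whenever $i \ne j$, by the defining orthogonality of~$M_j$.

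Next, I would just substitute \eqref{fd} into \eqref{Fourier} and use this splitting of the pairing together with the fact that $(-1)^{a+b}=(-1)^a(-1)^b$ for integers (or $\mathbb F_2$-values lifted to integers):
\begin{align*}
F(\vec w)
&= \sum_{\vec x \in L} \varphi_1(\vec x_1)\cdots \varphi_k(\vec x_k) \,(-1)^{\vec x_1 \vec w_1 + \cdots + \vec x_k \vec w_k} \\
&= \sum_{\vec x_1 \in L_1}\!\cdots\!\sum_{\vec x_k \in L_k} \prod_{i=1}^k \varphi_i(\vec x_i)\,(-1)^{\vec x_i \vec w_i} \\
&= \prod_{i=1}^k \Biggl(\,\sum_{\vec x_i \in L_i} \varphi_i(\vec x_i)\,(-1)^{\vec x_i \vec w_i}\Biggr),
\end{align*}
where the second equality uses that the bijection $L \leftrightarrow L_1 \times \cdots \times L_k$ given by $\vec x \mapsto (\vec x_1, \ldots, \vec x_k)$ turns a single sum over~$L$ into iterated sums, and the third is the elementary identity $\sum_{\mathbf a} \prod_i g_i(a_i) = \prod_i \sum_{a_i} g_i(a_i)$ for independent summation variables. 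The last line is exactly $\Phi_1(\vec w_1)\cdots\Phi_k(\vec w_k)$ by~\eqref{Phi_i}, which is~\eqref{Fp}.

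There is no genuine obstacle; the only step that deserves care is the first, because one must be explicit that $\vec w_i \in M_i$ really does pair trivially with $\vec x_j$ for $j \ne i$ so that the exponent in~$(-1)^{\vec x \vec w}$ decouples. Once that is in place, the factorization is purely formal, and the argument works verbatim in any characteristic two setting where the pairing $L \times L^* \to \mathbb F_2$ is non-degenerate on each summand.
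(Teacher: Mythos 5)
Your proof is correct and follows essentially the same route as the paper's: split the pairing $\vec x\,\vec w=\sum_i \vec x_i\vec w_i$ using the orthogonality defining the $M_i$, then turn the sum over $L$ into iterated sums over the $L_i$ and factor. You are merely more explicit than the paper about why the exponent decouples, which is a fine addition.
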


\begin{proof}
Under the proposition conditions, each summand in~\eqref{Fourier} becomes a product, namely, $f(\vec x)$ according to~\eqref{fd}, and $(-1)^{\vec x\vec w} = (-1)^{\vec x_1\vec w_1} \cdots (-1)^{\vec x_k\vec w_k}$. Moreover, the summation over $\vec x\in L$ is equivalent to the multiple summation over vectors~$\vec x_i$, each taking independently all values in its space~$L_i$. That is, \eqref{Fourier} becomes
\begin{equation*}
F(\vec w) = \sum _{\vec x_1\in L_1} \cdots \sum _{\vec x_k\in L_k} \varphi _1(\vec x_1) \cdots \varphi _k(\vec x_k) \, (-1)^{\vec x_1\vec w_1} \cdots (-1)^{\vec x_k\vec w_k},
\end{equation*}
which is the same as \eqref{Fp} and~\eqref{Phi_i}.
\end{proof}

It follows from Proposition~\ref{p:prod} that, for independent random variables such as our $t$-spins, the Fourier transform of their joint probability distribution is also the product of their individual Fourier transforms.

\subsection[Obtaining $t$-spin duals from $i$-spin duals]{Obtaining $\bm t$-spin duals from $\bm i$-spin duals}\label{ss:dt}

The Fourier transform of an $i$- or $t$-spin probability distribution is a function on the space of \emph{column} vectors---dual space to the space of spins. And a column~$\vec w$ of $t$-spin duals is obtained from its corresponding column~$\vec z$ of $i$-spin duals by the action of the \emph{same} matrix~$\mathbf G$~\eqref{nn} that transforms a \emph{row} of $t$-spins into its corresponding row of $i$-spins, as the following Proposition~\ref{p:duals} states. The difference is, however, that now matrix~\eqref{nn} acts on columns, and hence from the left!

According to Subsection~\ref{ss:2dn}, for a given a row~$\vec x$ of $t$-spins, its corresponding row~$\vec y$ of $i$-spins is
\[
\vec y = \vec x \mathbf G ,
\]
where $\mathbf G$~\eqref{nn} is the matrix whose rows are $t$-basis vectors.

\begin{proposition}\label{p:duals}
If we have an $\mathbb R$-valued function~$h$ on vectors~$\vec y$, with Fourier transform~$H$, then the Fourier transform~$F$ of function~$f$ defined as $f(\vec x) = h(\vec y)$ satisfies $F(\vec w) = H(\vec z)$, if
\[
\vec w = \mathbf G \vec z .
\]
\end{proposition}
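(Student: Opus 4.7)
The plan is to substitute the defining relations into the Fourier transform~\eqref{Fourier} and perform a linear change of summation variable, exploiting the invertibility of $\mathbf G$ established in Subsection~\ref{ss:ti}.

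Concretely, I would start from
\[
F(\vec w) = \sum_{\vec x \in L} f(\vec x)\,(-1)^{\vec x \vec w} = \sum_{\vec x \in L} h(\vec x \mathbf G)\,(-1)^{\vec x \vec w},
\]
using $f(\vec x) = h(\vec y) = h(\vec x\mathbf G)$. Next, I would substitute $\vec w = \mathbf G\vec z$ into the exponent and regroup by associativity of the row$\,\times\,$matrix$\,\times\,$column product (all taking place in $\mathbb F_2$), obtaining $\vec x(\mathbf G\vec z) = (\vec x\mathbf G)\vec z = \vec y\vec z$. Finally, since $\mathbf G$ is invertible with inverse~$\mathbf B$ given by~\eqref{BB}, the map $\vec x \mapsto \vec y = \vec x\mathbf G$ is a bijection of $L$ onto itself, so the summation can be re-indexed by~$\vec y$, yielding
\[
F(\vec w) = \sum_{\vec y \in L} h(\vec y)\,(-1)^{\vec y\vec z} = H(\vec z)
\]
by the very definition of~$H$.

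There is no real obstacle: the whole argument is a one-line manipulation built on associativity and invertibility. The only point requiring a moment's care is the row/column bookkeeping emphasized in the paragraph preceding the statement --- the rows of $\mathbf G$ are the $t$-basis vectors, so $\mathbf G$ acts on $i$-spin duals (columns) from the \emph{left} to give $t$-spin duals, while it acts on $t$-spin rows from the \emph{right} to give $i$-spin rows. This asymmetry is exactly what lets the associativity identity $\vec x(\mathbf G\vec z) = (\vec x\mathbf G)\vec z$ do all the work.
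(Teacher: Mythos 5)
Your argument is correct and coincides with the paper's own proof: both rest on the substitution $\vec y = \vec x\mathbf G$, the associativity identity $\vec x(\mathbf G\vec z)=(\vec x\mathbf G)\vec z$, and the invertibility of $\mathbf G$ to re-index the sum (the paper merely writes the same chain of equalities starting from $H(\vec z)$ rather than from $F(\vec w)$). No gaps.
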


\begin{proof}
\[
H(\vec z) = \sum _{\vec y} h(\vec y) (-1)^{\vec y \vec z} = \sum _{\vec x} h(\vec x \mathbf G) (-1)^{\vec x \mathbf G \vec z} = \sum _{\vec x} f(\vec x) (-1)^{\vec x (\mathbf G \vec z)} = F(\mathbf G \vec z).
\]
Here the second equality, where we pass from summation over~$\vec y$ to  summation over~$\vec x$, uses the invertibility of~$\mathbf G$, which we have shown in Subsection~\ref{ss:ti}.
\end{proof}

$t$-Spin duals obtained from the following \emph{$i$-spin duals for individual spins} will be of particular interest when we come to calculations of spin correlations in Section~\ref{s:A}. Consider linear space~$\mathsf V_1^*$ dual to~$\mathsf V_1$, and consider the basis in~$\mathsf V_1^*$ dual to $i$-basis in~$\mathsf V_1$. The element of this dual basis \emph{dual to a chosen $i$-basis vector} will be called \emph{dual column} to that vector, or to the $i$-spin corresponding to that vector.

\section[Probabilities of individual $t$-spins, and Fourier transform of a probability function for a $t$-spin dual column]{Probabilities of individual $\bm t$-spins, and Fourier transform of a probability function for a $\bm t$-spin dual column}\label{s:t}

\subsection[Eigenspaces of $A$ or $A^{\mathrm T}$ and input vector probabilities]{Eigenspaces of $\bm A$ or $\bm{A^{\mathrm T}}$ and input vector probabilities}\label{ss:pe}

After the transition to $t$-spins, the $\mathbb F_2$-linear operator represented by our $2^n\times 2^n\times 2^n$ block becomes, in matrix form, a \emph{direct sum} of $2^{2n}$ matrices. Namely, we have $2^{2n-1} + 2^{n-1}$ matrices~$A$, and $2^{2n-1} - 2^{n-1}$ matrices~$A^{\mathrm T}$, according to~\cite[Corollary~11]{block-spin} (and taking into account that the Frobenius transform on~$\mathbb F_2$ just leaves its elements where they were). Denote $p(x_1,x_2,x_3)$ the probability of an input vector $\begin{pmatrix}x_1 & x_2 & x_3\end{pmatrix} \in \mathbb F_2^3$ for a single~$A$. With our boundary conditions, it must be also an output vector:
\begin{equation*}
\begin{pmatrix}x_1 & x_2 & x_3\end{pmatrix} A = \begin{pmatrix}x_1 & x_2 & x_3\end{pmatrix},
\end{equation*}
hence, a permitted spin configuration occurs provided $\begin{pmatrix}x_1 & x_2 & x_3\end{pmatrix}$ lies in the \emph{eigenspace}~$E$ of~$A$ corresponding to eigenvalue~$1$. As we are considering the ``classical probability'' case where all permitted configurations have equal probabilities, and denoting $\dim E = d$, we have for an individual matrix~$A$:
\begin{equation}\label{pe}
p(x_1,x_2,x_3) = \left\{ \begin{array}{cl} \frac{1}{2^d} & \text{if } \begin{pmatrix}x_1 & x_2 & x_3\end{pmatrix} \in E, \\[0.3ex] 0 & \text{otherwise.} \end{array} \right.
\end{equation}

The same holds for~$A^{\mathrm T}$; in this case, we add a prime to the letter denoting a probability, e.g.,~$p'(x_1,x_2,x_3)$.

\subsection{Probabilities of a first spin value, at a given binary address}\label{ss:p1}

Denote $q(x_1)$ the probability of the event ``the first coordinate of an input/output vector of matrix~$A$ took a given value~$x_1$'', in the context of Subsection~\ref{ss:pe}. It is, of course, the sum over all possible values for $x_2$ and~$x_3$:
\begin{equation}\label{p1}
 \begin{aligned}
  q(0) = p(0,0,0) + p(0,0,1) + p(0,1,0) + p(0,1,1), \\
  q(1) = p(1,0,0) + p(1,0,1) + p(1,1,0) + p(1,1,1).
 \end{aligned}
\end{equation}
Similarly, probabilities~$q'(x_1)$ are defined for~$A^{\mathrm T}$.

Recall that Proposition~\ref{p:b} determines whether a specific spin~$x_1$ with a binary address $(\alpha, \beta)$ belongs to matrix~$A$ or~$A^{\mathrm T}$.

\subsection[Fourier transform of a probability function: value for a given $t$-spin dual column]{Fourier transform of a probability function: value for a given $\bm t$-spin dual column}\label{ss:v}

Denote $Q$ and~$Q'$ the Fourier transforms~\eqref{Fourier} of the real-valued functions $q$ and~$q'$ defined in Subsection~\ref{ss:p1}. As the domain of these functions is just the one-dimen\-sional space~$\mathbb F_2$, this simply means that
\begin{equation}\label{Q}
 Q(0) = q(0) + q(1), \qquad Q(1) = q(0) - q(1),
\end{equation}
and similarly for~$Q'$.

Denote now $f$ the following probability function:  $f(\vec x)$ is the probability of a $2^{2n}$-row~$\vec x$ of $t$-spins (obtained as a result of the spin transform of $i$-spins at the edges perpendicular to the first coordinate axis), and denote~$F$ its Fourier transform defined on columns~$\vec w$ of $t$-spin duals. $F(\vec w)$ is, for a given~$\vec w = \begin{pmatrix} w_1 & \dots & w_{2^{2n}} \end{pmatrix}^{\mathrm T}$, the product over all individual $t$-spin duals $Q(w_j)$ or~$Q'(w_j)$ in~$\vec w$, according to Propositions \ref{p:i} and~\ref{p:prod}.

For zero duals $w_j=0$, the multiplier $Q(0)$ or~$Q'(0)$ is unity, so what must really be done is take the product of values $Q(1)$ or~$Q'(1)$ over the binary addresses corresponding to entries $1\in \mathbb F_2$ in the column~$\vec w$.

\section[Calculations for specific matrices~$A$]{Calculations for specific matrices~$\bm A$}\label{s:A}

We are going to calculate one-, two-. three-, and four-spin probabilities for spins lying on edges parallel to the first coordinate axis, with an additional condition that all these edges intersect one fixed plane perpendicular to that axis. To be more exact, we will calculate the probabilities that the $i$-spins on those edges all take value~$0\in \mathbb F_2$ (as there are just two values for any spin, there is no problem then to calculate the probability of those spin taking any other values).  We impose \emph{cyclic boundary conditions}, so we can assume that the mentioned edges are input (\,=\,output\,), without loss of generality.

\subsection[Probabilities for a specific $A$ and its transpose, and their Fourier transforms]{Probabilities for a specific $\bm A$ and its transpose, and their Fourier transforms}\label{ss:pf}

We begin with choosing the following matrix~$A$:
\begin{equation}\label{cA}
A = (a_{ij}) = \begin{pmatrix} 0 & 1 & 1 \\ 0 & 0 & 1 \\ 1 & 0 & 1 \end{pmatrix}.
\end{equation}
We impose cyclic boundary conditions on~$A$~\eqref{cA}. As $A$ has exactly one row eigenvector, namely $\begin{pmatrix} 1 & 1 & 1 \end{pmatrix}$, we obtain, according to~\eqref{pe}, the following probabilities:
\begin{equation*}
p(0,0,0) = p(1,1,1) = \frac{1}{2}\,,\quad \text{other probabilities are zero},
\end{equation*}
which gives for the~$q$'s, according to~\eqref{p1},
\begin{equation*}
q(0) = q(1) = \frac{1}{2} .
\end{equation*}
Hence, the Fourier transform~\eqref{Q} is:
\begin{equation}\label{QA}
 Q(0) = 1, \qquad Q(1) = 0.
\end{equation}

For $A^{\mathrm T}$, we have the eigenvector $\begin{pmatrix} 0 & 1 & 1 \end{pmatrix}$, and obtain the following probabilities:
\[
p'(0,0,0) = p'(0,1,1) = \frac{1}{2}\,,\quad \text{other probabilities are zero},
\]
which gives for the~$q'$'s
\[
q'(0) = 1, \qquad q'(1) = 0 .
\]
In this case, the Fourier transform is:
\begin{equation}\label{QAT}
 Q'(0) = Q'(1) = 1 .
\end{equation}

\subsection[Matrix giving $t$-spin duals for the chosen~$A$]{Matrix giving $\bm t$-spin duals for the chosen~$\bm A$}\label{ss:h}

The tensor multipliers in matrix~$\mathbf G$~\eqref{nn} of the spin transform in the case of~$A$ given by~\eqref{cA} are, according to~\eqref{sfA},
\begin{equation}\label{GG}
\mathrm G_{13} = \begin{pmatrix}1 & 1\\ 1 & 0\end{pmatrix},\qquad \mathrm G_{12} = \begin{pmatrix}1 & 1\\ 0 & 1\end{pmatrix}.
\end{equation}
Each of them has thus one column~$\begin{pmatrix} 1 \\ 0 \end{pmatrix}$ and one column~$\begin{pmatrix} 1 \\ 1 \end{pmatrix}$, and this will turn out to be of key importance in our calculations.

\subsection{One-spin function}\label{ss:1-p}

Recall that we take a spin on an edge parallel to axis~$1$. Due to our cyclic boundary conditions, all such edges are on an equal footing, so it is enough to take the \emph{input} edge with the binary address most convenient for our further calculations, namely
\begin{equation}\label{bb10}
 (b_2, b_3) = ( \, \underbrace{1\ldots 1}_n\,, \: \underbrace{0\ldots 0}_n \, ).
\end{equation}
The $i$-spin dual column (see Subsection~\ref{ss:dt}) corresponding to this edge is
\begin{equation*}
\vec r =
 \underbrace{\begin{pmatrix} 0 \\ 1 \end{pmatrix} \otimes \cdots \otimes \begin{pmatrix} 0 \\ 1 \end{pmatrix}}_n \, \otimes \,
 \underbrace{\begin{pmatrix} 1 \\ 0 \end{pmatrix} \otimes \cdots \otimes \begin{pmatrix} 1 \\ 0 \end{pmatrix}}_n ,
\end{equation*}
hence, the corresponding column of $t$-spin duals is, according to Proposition~\ref{p:duals}, and due to the form of matrices~\eqref{GG},
\begin{equation}\label{s1}
 \vec w = \mathbf G \, \vec r =
 \underbrace{\begin{pmatrix} 1 \\ 0 \end{pmatrix} \otimes \cdots \otimes \begin{pmatrix} 1 \\ 0 \end{pmatrix}}_n \, \otimes \,
 \underbrace{\begin{pmatrix} 1 \\ 0 \end{pmatrix} \otimes \cdots \otimes \begin{pmatrix} 1 \\ 0 \end{pmatrix}}_n .
\end{equation}
That is, just one nonzero $t$-spin appears in~$\vec w$, with binary address
\begin{equation}\label{ab00}
(\alpha,\beta) = (\underbrace{0\ldots 0}_n, \: \underbrace{0\ldots 0}_n),
\end{equation}
where matrix~$A$ (not transposed, according to Proposition~\ref{p:b}) stands, with the corresponding value $Q(1)=0$, according to~\eqref{QA}. Hence, $F(\vec w)$ in~\eqref{sum-1w} is also zero (because $F(\vec w)$ is a product, as in~\ref{Fp}, and now it is a product of the zero at the address~\eqref{ab00} and unities $Q(0)=1$ at all other addresses).

Concerning $F(\vec 0)=1$, it is, according to~\eqref{sum-all}, the sum of all probabilities~$f(\vec x)$, hence $F(\vec 0)=1$. We get thus finally that the probability of our spin being zero is~$\frac{1}{2}$.

\subsection{Two-spin correlations}\label{ss:2-p}

Here we take again one spin with address~\eqref{bb10}, with its column~\eqref{s1} of $t$-spins duals. For calculations in this subsection, we rename that column from~$\vec w$ to~$\vec w_1$.

For the second spin, there arises a column, we call it~$\vec w_2$, not coinciding with~\eqref{s1}, and being, due to~\eqref{GG}, the tensor product of some 2-columns $\begin{pmatrix} 1 \\ 1 \end{pmatrix}$ and some~$\begin{pmatrix} 1 \\ 0 \end{pmatrix}$. The binary addresses of unities in~$\vec w_2$ are, according to the numeration introduced in Subsection~\ref{ss:bin}, those having \emph{any} digit at the same positions as $\begin{pmatrix} 1 \\ 1 \end{pmatrix}$ in the tensor product, and \emph{zero} at the positions of~$\begin{pmatrix} 1 \\ 0 \end{pmatrix}$.

Applying formula~\eqref{sum-2w}, we first note that $F(\vec 0)$ and $F(\vec w_1) = F(\vec w)$ are the same as in Subsection~\ref{ss:1-p}, and $F(\vec w_2)$ is the same as~$F(\vec w_1)$  due to translational invariance. That is,
\begin{equation}\label{0w1w2}
F(\vec 0)=1,\qquad F(\vec w_1)=F(\vec w_2)=0.
\end{equation}
Consider now column $\vec w_1+\vec w_2$. Due to presence of at least one multiplier $\begin{pmatrix} 1 \\ 1 \end{pmatrix}$ in~$\vec w_2$, it contains necessarily a nonzero entry at the address $(\alpha, \beta)$ with either $\alpha$ all zeros but $\beta$ not, or vice versa. This address corresponds to~$A$ (not transposed), according to Proposition~\ref{p:b}, and this brings about the multiplier~$Q(1)=0$, according to~\eqref{QA}.

So, \emph{all} terms except~$F(\vec 0)$ in the numerator of the rhs of~\eqref{sum-2w} vanish, hence, the two-spin correlation function is~$\frac{1}{4}$. This means that there is no correlation between a pair of spins in our case, as long as we consider only this pair.

\subsection{Three-spin correlations}\label{ss:3-p}

Here, formula~\eqref{sum-kw} applies, with $k=3$, and $\mathbb F_2$-linear space~$K$ having basis vectors $\vec w_1$, $\vec w_2$ and~$\vec w_3$. Then, all the summands in its rhs that have the sum of either one or two~$\vec w_j$ in the argument of~$F$ are already zeros, due to Subsections \ref{ss:1-p} and~\ref{ss:2-p} and translational invariance.

Concerning $F(\vec w_1 + \vec w_2 + \vec w_3)$, we note that all these vectors~$\vec w_j$ have a unity at the address $(\alpha, \beta)$ with \emph{all zero} coordinates, which address corresponds, as we remember, to~$A$. Three unities added together give again a unity, so this address gives factor~$Q(1)=0$, from where $F(\vec w_1 + \vec w_2 + \vec w_3) = 0$.

Hence, the three-spin function is~$\frac{1}{8}$, which means that any three spins of the kind specified in the beginning of this section are also independent.

\subsection{Four-spin correlations}\label{ss:4-p}

\begin{lemma}\label{l:m}
Let $n=1,2,\ldots$ be a natural number, and consider $\mathbb F_2$-linear space~$\mathbb F_2^{2^n}$, whose elements we write here as \emph{column} vectors. Consider four such vectors $\vec a, \vec b, \vec c, \vec d$ of the following special structure: each of them is a tensor product of $n$ $2$-columns, each of these latter being either $\begin{pmatrix}1\\ 0\end{pmatrix}$ or~$\begin{pmatrix}1\\ 1\end{pmatrix}$ (where $0,1\in \mathbb F_2$ of course). Then, condition
\begin{equation}\label{abcd}
\vec a + \vec b + \vec c + \vec d = \vec 0
\end{equation}
implies that there are two pairs of equal vectors among $\vec a, \vec b, \vec c, \vec d$ (for instance, $\vec a = \vec b$ and $\vec c = \vec d$).
\end{lemma}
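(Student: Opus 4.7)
The plan is to exhibit the $2^n$ admissible vectors (those of the form prescribed in the lemma) as an $\mathbb{F}_2$-basis of $\mathbb{F}_2^{2^n}$, after which the relation~\eqref{abcd} will immediately dictate how the four must pair up. I would encode each admissible vector by the subset $E\subseteq\{1,\ldots,n\}$ of those factor-indices at which $\begin{pmatrix}1\\1\end{pmatrix}$ (rather than $\begin{pmatrix}1\\0\end{pmatrix}$) appears, and denote it $\vec v_E$; there are $2^n$ such vectors.

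The key observation is that $\begin{pmatrix}1\\0\end{pmatrix}$ and $\begin{pmatrix}1\\1\end{pmatrix}$ already form a basis of $\mathbb{F}_2^2$, since their sum is $\begin{pmatrix}0\\1\end{pmatrix}$ and so together they span the two-dimen\-sional space. Invoking the standard fact that tensor products of bases of each factor yield a basis of the tensor product space, the vectors $\vec v_E$ accordingly form a basis of $(\mathbb{F}_2^2)^{\otimes n}=\mathbb{F}_2^{2^n}$.

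Writing $\vec a=\vec v_{E_a}$, $\vec b=\vec v_{E_b}$, $\vec c=\vec v_{E_c}$, $\vec d=\vec v_{E_d}$, condition~\eqref{abcd} then becomes an $\mathbb{F}_2$-linear relation $\vec v_{E_a}+\vec v_{E_b}+\vec v_{E_c}+\vec v_{E_d}=\vec 0$ among basis vectors, which forces each subset to occur with even multiplicity in the multiset $\{E_a,E_b,E_c,E_d\}$. For a multiset of cardinality~$4$ in which all multiplicities are even, the only possibilities are $\{E,E,E,E\}$ and $\{E_1,E_1,E_2,E_2\}$ with $E_1\neq E_2$, and in both cases the four vectors split into two pairs of equal ones, as required.

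I anticipate no serious obstacle here. The only step meriting a moment's care is the basis-of-tensor-products claim itself; an entirely concrete alternative, should one prefer to avoid invoking it, is to observe that the entry of $\vec v_E$ at position $s\in\{0,1\}^n$ equals~$1$ iff $\mathrm{supp}(s)\subseteq E$ and~$0$ otherwise, which makes the $2^n\times 2^n$ matrix whose columns are the $\vec v_E$'s triangular with unit diagonal in any row/column ordering extending inclusion on subsets, hence invertible over $\mathbb{F}_2$.
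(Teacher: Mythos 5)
Your proof is correct, and it takes a genuinely different (though closely related) route from the paper's. The paper argues extremally: it passes to the supports $\mathsf a,\mathsf b,\mathsf c,\mathsf d$ of the four vectors, notes that \eqref{abcd} forces every binary address to lie in an \emph{even} number of them, and then picks the \emph{greatest} address $h$ in the union; since the greatest address in the support of such a tensor product has its binary digits equal to $1$ exactly at the positions of the factors $\begin{pmatrix}1\\1\end{pmatrix}$, the (at least) two vectors whose supports contain $h$ must coincide, and the remaining two then sum to zero and hence coincide as well. You instead prove the stronger statement that all $2^n$ admissible vectors $\vec v_E$ are linearly independent --- either via the tensor-product-of-bases fact (legitimate here, since $\begin{pmatrix}1\\0\end{pmatrix},\begin{pmatrix}1\\1\end{pmatrix}$ is a basis of $\mathbb F_2^2$) or via your unitriangularity observation that the entry of $\vec v_E$ at address $s$ equals $1$ iff $\mathrm{supp}(s)\subseteq E$ --- and then read off the even-multiplicity condition on the multiset $\{E_a,E_b,E_c,E_d\}$, whose only realizations on four elements are $4$ or $2+2$. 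The two arguments rest on the same structural fact (your triangularity remark is exactly the paper's ``greatest address determines the vector'' recast in matrix form), but yours buys more: full linear independence immediately yields the analogous conclusion for \emph{any} even number of admissible vectors summing to zero, whereas the paper's extremal argument is more minimal and entirely self-contained. Both are complete proofs.
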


\begin{proof}
For each of $\vec a, \vec b, \vec c, \vec d$, consider the set of $n$-binary-digit addresses of its entries~$1\in \mathbb F_2$ (defined in the same way as in~\eqref{01}, only changing rows to columns). Denote these sets $\mathsf a, \mathsf b, \mathsf c, \mathsf d$. It follows from~\eqref{abcd} that \emph{any} binary address belongs to an \emph{even} number of them.

In particular, the \emph{greatest} binary number~$h$ in the union $\mathsf a \cup \mathsf b \cup \mathsf c \cup \mathsf d$ belongs to either two or all four of $\mathsf a, \mathsf b, \mathsf c, \mathsf d$. If, for instance, $h\in \mathsf a$ and~$h\in \mathsf b$, and taking into account that the unities in~$h$ stand exactly at the same places where multipliers~$\begin{pmatrix}1\\ 1\end{pmatrix}$ occur in the corresponding vector. we see that $\vec a = \vec b$ and consequently $\vec c = \vec d$.
\end{proof}

We consider four $t$-spin duals $\vec w_j$, \ $j=1,2,3,4$. They all are tensor products
\begin{equation}\label{wuv}
 \vec w_j = \vec u_j \otimes \vec v_j
\end{equation}
of two $2^n$-column vectors, simply because $\vec w_j = \mathbf G \cdot (i\text{-dual})$, according to Proposition~\ref{p:duals}, with $\mathbf G$ having the form~\eqref{nn}, and every $i$-dual being a tensor product of 2-columns $\begin{pmatrix}1\\ 0\end{pmatrix}$ or~$\begin{pmatrix}0\\ 1\end{pmatrix}$.

\begin{lemma}\label{l:l}
Denote~$\mathsf h$ the set of binary addresses (this time they are $2n$-digit) of unities in the sum $\vec w_1 + \vec w_1 + \vec w_2 + \vec w_3 + \vec w_4$. A \emph{necessary} condition for~$\mathsf h$ to have no addresses corresponding to~$A$ is that there are two pair of equal vectors among both the~$u_j$ and~$v_j$.
\end{lemma}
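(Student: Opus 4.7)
The plan is to reduce Lemma~\ref{l:l} to Lemma~\ref{l:m} by cleverly picking a special slice of the address space.

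First, I would record two structural observations. The entry of $\vec w_j = \vec u_j \otimes \vec v_j$ at a $2n$-digit address $(\alpha,\beta)$ is the product $(\vec u_j)_\alpha (\vec v_j)_\beta \in \mathbb F_2$, so the hypothesis ``$\mathsf h$ contains no address corresponding to~$A$'' translates, via Proposition~\ref{p:b}, into the system of equations
\begin{equation*}
 \sum_{j=1}^{4}(\vec u_j)_\alpha (\vec v_j)_\beta = 0 \qquad \text{for every }(\alpha,\beta)\text{ with }\alpha+\beta<2^n.
\end{equation*}
Second, each $\vec u_j$ is a tensor product of $2$-columns chosen from $\bigl(\begin{smallmatrix}1\\0\end{smallmatrix}\bigr)$ and $\bigl(\begin{smallmatrix}1\\1\end{smallmatrix}\bigr)$, both of which have~$1$ in the top position. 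Consequently $(\vec u_j)_{\underbrace{0\cdots 0}_n}=1$ for every~$j$, and analogously for the $\vec v_j$.

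Next I would exploit these two facts together. Specialize the vanishing system to $\alpha=\underbrace{0\cdots 0}_n$. Then $\alpha+\beta=\beta<2^n$ is automatic for every admissible~$\beta$, so the equations collapse to
\begin{equation*}
 \sum_{j=1}^{4}(\vec v_j)_\beta = 0 \quad \text{for all }\beta,\qquad\text{i.e.}\qquad \vec v_1+\vec v_2+\vec v_3+\vec v_4 = \vec 0.
\end{equation*}
Specializing instead to $\beta=\underbrace{0\cdots 0}_n$ yields, by the symmetric argument, $\vec u_1+\vec u_2+\vec u_3+\vec u_4 = \vec 0$.

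Finally, since the $\vec u_j$ and $\vec v_j$ satisfy precisely the structural hypothesis of Lemma~\ref{l:m} (tensor products of $\bigl(\begin{smallmatrix}1\\0\end{smallmatrix}\bigr)$'s and $\bigl(\begin{smallmatrix}1\\1\end{smallmatrix}\bigr)$'s that sum to zero), I would invoke that lemma twice to conclude that the $\vec u_j$ split into two equal pairs and, independently, the $\vec v_j$ split into two equal pairs. The only potentially subtle point is the justification that $\alpha = \underbrace{0\cdots 0}_n$ really is an $A$-address for \emph{every}~$\beta$, but this is immediate because $\alpha+\beta = \beta \le 2^n-1 < 2^n$; once this is noticed, the rest is mechanical, and the heavy combinatorial content has already been isolated in Lemma~\ref{l:m}.
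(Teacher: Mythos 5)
Your proposal is correct and follows essentially the same route as the paper's own proof: restrict attention to the first ghost row ($\alpha=\underbrace{0\cdots 0}_n$) and first ghost column ($\beta=\underbrace{0\cdots 0}_n$), which consist entirely of $A$-addresses, deduce $\vec v_1+\vec v_2+\vec v_3+\vec v_4=\vec 0$ and $\vec u_1+\vec u_2+\vec u_3+\vec u_4=\vec 0$, and invoke Lemma~\ref{l:m} twice. You merely spell out a detail the paper leaves implicit, namely that $(\vec u_j)_{\underbrace{0\cdots 0}_n}=1$ for all $j$, which is what turns ``the first row is all zeros'' into the vector equation for the $\vec v_j$.
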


\begin{proof}
In our ``ghost'' coordinates $(\alpha, \beta)$, both the first row (\,$\alpha=0$\,) and first column (\,$\beta=0$\,) correspond only to~$A$, not~$A^{\mathrm T}$. Concerning, for instance, the first row, we see that it must have only zeros, that is, $\vec v_1 + \vec v_2 + \vec v_3 + \vec v_4 = \vec 0$. Applying Lemma~\ref{l:m}, we see that there are two pairs of equal vectors among~$v_j$. Considering the first column, we arrive at the same statement for the~$u$'s.
\end{proof}

As all~$w_j$ are supposed to be different, the necessary condition in Lemma~\ref{l:l} means that, after a re-numbering of $u$'s and~$v$'s, we must have
\begin{equation*}
 \begin{aligned}
  \vec w_1 = \vec u_1 \otimes \vec v_1, \qquad \vec w_2 = \vec u_1 \otimes \vec v_2, \\
  \vec w_3 = \vec u_2 \otimes \vec v_1, \qquad \vec w_4 = \vec u_2 \otimes \vec v_2 .
 \end{aligned}
\end{equation*}
For $i$-spin duals, this means that they must be in the vertices of a quadrilateral in coordinates $(x_2, x_3)$. We choose again the first spin to be in the simplest form, namely, $\vec w_1 = \vec w$~\eqref{s1}, so the quadrilateral's vertices are
\begin{equation}\label{xi}
 (x_2, x_3)\quad = \quad (\underbrace{1\ldots 1}_n, \underbrace{0\ldots 0}_n),\; (\underbrace{1\ldots 1}_n, \chi _3 ),\; (\chi _2, \underbrace{0\ldots 0}_n),\; (\chi _2, \chi _3),
\end{equation}
with some $n$-binary numbers
\begin{equation}\label{xi0}
\chi _2 \ne 1\ldots 1 \quad\; \text{and} \quad\; \chi _3 \ne 0\ldots 0.
\end{equation}
Note that only $(\chi _2, \chi _3)$ produces a $t$-spin dual with unities outside both the first row (nonzero~$\alpha$) and first column (nonzero~$\beta$).

\begin{lemma}\label{l:k}
Assuming~\eqref{xi0}, there is a unity in the corresponding $t$-dual outside both the first row (nonzero~$\alpha$) and first column (nonzero~$\beta$). If, besides, $(\chi _2, \chi _3) \ne (\xi _2, \xi _3)$, where
\begin{equation}\label{xi1}
 (\xi _2, \xi _3) \stackrel{\mathrm{def}}{=} (0\underbrace{1\ldots 1}_{n-1}, 1\underbrace{0\ldots 0}_{n-1}),
\end{equation}
then there exists such unity in a point $(\alpha, \beta)$ with $\alpha + \beta < 2_{10}^n$ and thus belonging, due to Proposition~\ref{p:b}, to matrix~$A$.
\end{lemma}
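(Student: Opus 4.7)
The plan is to decode, for the point $(\chi_2, \chi_3)$, exactly which $(\alpha, \beta)$-addresses of the corresponding $t$-dual carry a unity, and then to pick one of the required form. The key computational step is the action of the tensor factors of $\mathbf G$ on the $i$-dual column $\vec r$. Using the four images $\mathrm G_{13}\binom{1}{0} = \binom{1}{1}$, $\mathrm G_{13}\binom{0}{1} = \binom{1}{0}$, $\mathrm G_{12}\binom{1}{0} = \binom{1}{0}$, $\mathrm G_{12}\binom{0}{1} = \binom{1}{1}$, one sees that $\mathbf G \vec r$ is again a tensor product of $2n$ 2-columns, each being either $\binom{1}{0}$ or $\binom{1}{1}$. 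Reading off the support in the convention of Subsection~\ref{ss:bin} yields the clean description: a unity of the $t$-dual sits at $(\alpha, \beta)$ if and only if the $i$-th bit of $\alpha$ is $0$ for every $i$ with $(\chi_2)_i = 1$, and the $j$-th bit of $\beta$ is $0$ for every $j$ with $(\chi_3)_j = 0$; the remaining ``free'' bits may be chosen independently.

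With this description, the first assertion of the lemma is immediate: condition~\eqref{xi0} supplies at least one free bit to each of $\alpha$ and $\beta$, so setting those bits to $1$ produces a unity with $\alpha \ne 0$ and $\beta \ne 0$ simultaneously.

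For the second assertion, the idea is to select the smallest admissible positive values. Let $i^*$ denote the largest $i$ with $(\chi_2)_i = 0$, and $j^*$ the largest $j$ with $(\chi_3)_j = 1$; these are the lowest-weight free positions for $\alpha$ and $\beta$ respectively. Setting only bit $i^*$ of $\alpha$ and only bit $j^*$ of $\beta$ gives $\alpha = 2^{n-i^*}$ and $\beta = 2^{n-j^*}$. As long as at least one of $i^*, j^*$ is $\ge 2$, we obtain $\alpha + \beta \le 2^{n-1} + 2^{n-2} < 2^n$, and Proposition~\ref{p:b} places this unity in a block of~$A$.

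The one potential obstacle is the case $i^* = j^* = 1$. But $i^* = 1$ means no $i \ge 2$ satisfies $(\chi_2)_i = 0$, and together with $(\chi_2)_1 = 0$ this forces $\chi_2 = 0\underbrace{1\ldots 1}_{n-1} = \xi_2$; symmetrically $j^* = 1$ gives $\chi_3 = \xi_3$. Hence this obstruction occurs only at the excluded configuration $(\chi_2,\chi_3) = (\xi_2,\xi_3)$, and the lemma follows.
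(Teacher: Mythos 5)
Your proof is correct and follows essentially the same route as the paper's: both identify the support of the $t$-dual $\mathbf G\vec r$ via the factorwise images $\binom{1}{0}\mapsto\binom{1}{1}$ (free bit) versus $\binom{0}{1}\mapsto\binom{1}{0}$ (forced zero bit), then place a single $1$ in $\alpha$ at a zero-position of $\chi_2$ and a single $1$ in $\beta$ at a one-position of $\chi_3$, observing that one of these positions can be taken $\ge 2$ precisely when $(\chi_2,\chi_3)\ne(\xi_2,\xi_3)$. Your explicit characterization of the full support of the $t$-dual is a slightly more detailed version of what the paper leaves implicit, but the argument is the same.
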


\begin{proof}
Assuming \eqref{xi0}, $\chi _2$ has a~$0$ at some position~$d$, and $\chi _3$ has a~$1$ at some position~$d'$; let these positions be counted from the left. For $t$-dual corresponding to $i$-dual $(\chi _2, \chi _3)$, this means that there is a unity at the point
\begin{equation}\label{ih}
(\alpha, \beta) = (\underbrace{0\ldots 0}_{d - 1} 1 \underbrace{0\ldots 0}_{n - d},\; \underbrace{0\ldots 0}_{d' - 1} 1 \underbrace{0\ldots 0}_{n - d'}).
\end{equation}

If also $(\chi _2, \chi _3) \ne (\xi _2, \xi _3)$, then at least one of $d, d'$ is greater than~$1$, and $\alpha + \beta < 2_{10}^n$ clearly holds for the point~\eqref{ih}.
\end{proof}

\begin{theorem}\label{th:theorem}
If the four $i$-spins are in the vertices of a $2^{n-1}\times 2^{n-1}$ square, then the probability for them all to take value $0\in \mathbb F_2$ is~$\frac{1}{8}$. Otherwise, the probability is $\frac{1}{16}$, so they are independent.
\end{theorem}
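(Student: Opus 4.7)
The plan is to invoke formula~\eqref{sum-kw} with $k = 4$ for the $\mathbb F_2$-subspace $K$ spanned by the $t$-spin dual columns $\vec w_1, \vec w_2, \vec w_3, \vec w_4$ of the four chosen $i$-spins, giving
\[
P = \frac{1}{16}\sum_{\vec w \in K} F(\vec w).
\]
The sixteen summands correspond to sums of $0,1,2,3,$ or $4$ of the $\vec w_j$'s. The term $\vec w = \vec 0$ contributes $F(\vec 0) = 1$ by~\eqref{sum-all}, and by Subsections~\ref{ss:1-p}, \ref{ss:2-p}, and~\ref{ss:3-p} together with translational invariance (which lets any single spin, pair, or triple of the four be viewed as a smaller correlation problem of the same type already solved) all $14$ terms with a nontrivial proper sum of the $\vec w_j$'s vanish. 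Hence
\[
P = \frac{1 + F(\vec w_1 + \vec w_2 + \vec w_3 + \vec w_4)}{16},
\]
and the whole theorem reduces to identifying when the remaining term is $0$ versus~$1$.

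To pin down this term, I would apply Lemmas~\ref{l:l} and~\ref{l:k}. By Lemma~\ref{l:l}, $F(\vec w_1 + \cdots + \vec w_4)$ can be nonzero only when, after renumbering, each $\vec w_j$ factors as $\vec u_a \otimes \vec v_b$ with $a, b \in \{1, 2\}$, which means the four $i$-spins sit at the corners of a $(b_2, b_3)$-quadrilateral. Using translational invariance, I place one corner at the standard address~\eqref{bb10}, parametrizing the opposite corner by $(\chi_2, \chi_3)$ as in~\eqref{xi},~\eqref{xi0}. Lemma~\ref{l:k} then shows that unless $(\chi_2, \chi_3) = (\xi_2, \xi_3)$ from~\eqref{xi1}, the $t$-dual of the $(\chi_2, \chi_3)$-corner contains a unity at an address belonging to~$A$, which inserts a killer factor $Q(1) = 0$ into the product defining~$F(\vec w_1 + \cdots + \vec w_4)$. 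The unique surviving configuration has side lengths $\underbrace{1\ldots 1}_n - 0\underbrace{1\ldots 1}_{n-1} = 2^{n-1}$ and $1\underbrace{0\ldots 0}_{n-1} - 0 = 2^{n-1}$, i.e.\ is precisely a $2^{n-1}\times 2^{n-1}$ square.

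It then remains to verify that $F(\vec w_1 + \cdots + \vec w_4) = 1$ for this square. Setting $\vec p = \begin{pmatrix}1\\0\end{pmatrix}^{\otimes n}$ and $\vec q = \begin{pmatrix}1\\1\end{pmatrix} \otimes \begin{pmatrix}1\\0\end{pmatrix}^{\otimes(n-1)}$, Proposition~\ref{p:duals} and the explicit matrices~\eqref{GG} give $\vec w_1 = \vec p \otimes \vec p$, $\vec w_2 = \vec p \otimes \vec q$, $\vec w_3 = \vec q \otimes \vec p$, $\vec w_4 = \vec q \otimes \vec q$, so the sum telescopes as $(\vec p + \vec q) \otimes (\vec p + \vec q)$. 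A short calculation yields $\vec p + \vec q = \begin{pmatrix}0\\1\end{pmatrix} \otimes \begin{pmatrix}1\\0\end{pmatrix}^{\otimes(n-1)}$, so the full sum has a single nonzero entry at $(\alpha, \beta) = (1\underbrace{0\ldots 0}_{n-1}, 1\underbrace{0\ldots 0}_{n-1})$, which satisfies $\alpha + \beta = 2^n$ and, by Proposition~\ref{p:b}, belongs to~$A^{\mathrm T}$. By~\eqref{QAT} the associated factor is $Q'(1) = 1$, whence $F(\vec w_1 + \cdots + \vec w_4) = 1$ and $P = \tfrac{1}{8}$. In every other four-spin configuration the term vanishes and $P = \tfrac{1}{16} = \bigl(\tfrac{1}{2}\bigr)^4$, witnessing independence.

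The main obstacle I anticipate is this final coincidence: the unique address surviving Lemma~\ref{l:k} produces, via the tensor-product telescoping, a single unity lying exactly on the boundary $\alpha + \beta = 2^n$ of Proposition~\ref{p:b}, landing on the $A^{\mathrm T}$-side where the factor $Q'(1) = 1$ rescues the product, rather than on the $A$-side where $Q(1) = 0$ would kill it. This delicate matching between the special address~\eqref{xi1} and the $A/A^{\mathrm T}$ boundary is the heart of the theorem and explains why the only nontrivial four-spin correlation arises precisely from $2^{n-1}\times 2^{n-1}$ squares.
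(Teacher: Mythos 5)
Your proposal is correct and follows essentially the same route as the paper: formula~\eqref{sum-kw} with $k=4$, vanishing of all proper partial sums, Lemmas \ref{l:l} and~\ref{l:k} to isolate the $2^{n-1}\times 2^{n-1}$ square, and an explicit computation showing the surviving term has a single unity at $(\alpha,\beta)=(1\underbrace{0\ldots0}_{n-1},\,1\underbrace{0\ldots0}_{n-1})$, which lies on the $A^{\mathrm T}$ side so that $Q'(1)=1$. Your telescoping $(\vec p+\vec q)\otimes(\vec p+\vec q)$ is just a slightly tidier packaging of the paper's computation of $\mathbf G$ applied to the sum of the four $i$-duals.
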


\begin{proof}
Consider the sum~\eqref{sum-kw} for $k=4$. It follows from what we have already calculated that, in it, $F(\vec 0)=1$ (as before), while all the summands with one, two or three~$\vec w_j$ in the argument of~$F$ are zero.

Then, Lemma~\ref{l:l} means that if the $i$-spins are \emph{not} in the vertices of a quadrilateral, then $F(\vec w_1 + \vec w_2 + \vec w_3 + \vec w_4) = 0$ immediately. If they are, but the quadrilateral is not $2^{n-1}\times 2^{n-1}$, then the same follows from Lemma~\ref{l:k}, after moving the first vertex into the first of points~\eqref{xi}. So, \eqref{sum-kw} gives the probability~$\frac{1}{16}$ in these cases, which means (together with the results for $k\le 3$ spins) that the four spins are independent.

If, however, the $i$-spins \emph{are} in the vertices of a $2^{n-1}\times 2^{n-1}$ square, then, after moving the first vertex into the first of points~\eqref{xi}, we have all the points~\eqref{xi} with, moreover, $(\chi _2, \chi _3) = (\xi _2, \xi _3)$. The $i$-spin dual that has unities exactly at these places---denote it~$\vec a$, for this moment---is
\begin{equation*}
  \vec a = \begin{pmatrix} 1 \\ 1 \end{pmatrix} \otimes \underbrace{ \begin{pmatrix} 1 \\ 0 \end{pmatrix} \otimes \cdots \otimes \begin{pmatrix} 1 \\ 0 \end{pmatrix} }_{n-1} \;\; \otimes \;\; \begin{pmatrix} 1 \\ 1 \end{pmatrix} \otimes \underbrace{ \begin{pmatrix} 1 \\ 0 \end{pmatrix} \otimes \cdots \otimes \begin{pmatrix} 1 \\ 0 \end{pmatrix} }_{n-1} ,
\end{equation*}
and having the explicit form~\eqref{BB} of $\mathbf B = \mathbf G^{-1}$, one can easily see that
\begin{equation*}
 \vec a = \mathbf B \vec b,
\end{equation*}
where $\vec b$ is the following $t$-spin dual:
\begin{equation*}
 \vec b = \begin{pmatrix} 0 \\ 1 \end{pmatrix} \otimes \underbrace{ \begin{pmatrix} 1 \\ 0 \end{pmatrix} \otimes \cdots \otimes \begin{pmatrix} 1 \\ 0 \end{pmatrix} }_{n-1} \;\; \otimes \;\; \begin{pmatrix} 0 \\ 1 \end{pmatrix} \otimes \underbrace{ \begin{pmatrix} 1 \\ 0 \end{pmatrix} \otimes \cdots \otimes \begin{pmatrix} 1 \\ 0 \end{pmatrix} }_{n-1}.
\end{equation*}
This means that $\vec b$ has just one entry~$1\in \mathbb F_2$, and its address is $(\alpha, \beta) = (1\underbrace{0\ldots 0}_{n-1}, 1\underbrace{0\ldots 0}_{n-1})$. According to Proposition~\ref{p:b}, this address belongs to~$A^{\mathrm T}$, and according to~\eqref{QAT}, it gives $F(\vec w_1 + \vec w_2 + \vec w_3 + \vec w_4) = 1$, hence the probability is~$\frac{1}{8}$, which shows an \emph{interdependence} between these spins!
\end{proof}

\subsection[Other matrices $A$]{Other matrices $\bm A$}\label{ss:rA}
 \subsubsection{Matrices with the same correlations}\label{sss:same}

There are actually twelve $3\times 3$ matrices~$A$ over~$\mathbb F_2$ (including~\eqref{cA}) having the following properties:
\begin{itemize}\itemsep 0pt
 \item[$\mathrm{(a)}$] condition~\eqref{Delta} holds,
 \item[$\mathrm{(b)}$] $A$ has exactly one row eigenvector with eigenvalue~$1$, and it has the form $\begin{pmatrix} 1 & * & * \end{pmatrix}$ (one at the first position),
 \item[$\mathrm{(c)}$] the row eigenvector of~$A^{\mathrm T}$ with eigenvalue~$1$ (it exists and is unique because of~$\mathrm{(b)}$) has the form $\begin{pmatrix} 0 & * & * \end{pmatrix}$ (zero at the first position).
\end{itemize}

The twelve matrices are:
\begin{equation}\label{6A}
 \begin{aligned}
\begin{pmatrix}
 0 & 1 & 1 \\
 0 & 0 & 1 \\
 1 & 0 & 1 \\
\end{pmatrix}, \qquad
\begin{pmatrix}
 0 & 1 & 1 \\
 0 & 0 & 1 \\
 1 & 1 & 0 \\
\end{pmatrix}, \qquad
\begin{pmatrix}
 0 & 1 & 1 \\
 1 & 0 & 1 \\
 1 & 0 & 1 \\
\end{pmatrix}, \\
\begin{pmatrix}
 1 & 1 & 1 \\
 0 & 0 & 1 \\
 1 & 0 & 1 \\
\end{pmatrix}, \qquad
\begin{pmatrix}
 1 & 1 & 1 \\
 0 & 0 & 1 \\
 1 & 1 & 0 \\
\end{pmatrix}, \qquad
\begin{pmatrix}
 1 & 1 & 1 \\
 1 & 0 & 1 \\
 1 & 0 & 1 \\
\end{pmatrix},
 \end{aligned}
\end{equation}
and six more matrices are obtained by interchanging the 2nd and 3rd rows as well as 2nd and 3rd columns in the matrices~\eqref{6A}, that is,
\begin{equation}\label{6mA}
A \mapsto H A H, \qquad H = H^{-1} = \begin{pmatrix} 1 & 0 & 0 \\ 0 & 0 & 1 \\ 0 & 1 & 0 \end{pmatrix}.
\end{equation}

A calculation shows that each of the matrices $\mathrm G_{13}$ and~$\mathrm G_{12}$~\eqref{sfA} for each of matrices~\eqref{6A}, or obtained from them according to~\eqref{6mA}, is either $\begin{pmatrix} 1 & 1 \\ 0 & 1 \end{pmatrix}$ or $\begin{pmatrix} 1 & 1 \\ 1 & 0 \end{pmatrix}$. Then, the same methods can be applied as above in this section, with exactly the same results for the $k$-spin probabilities, $k=1,2,3,4$.

\subsubsection[A simple case with other matrices~$A$]{A simple case with other matrices~$\bm A$}\label{sss:f}

There is also a case where correlations of the kind described in the beginning of this section can be calculated using the same methods, and even much easier. Namely, assume that matrix~$A$, still obeying condition~\eqref{Delta}, has, instead of $\mathrm{(b)}$ and~$\mathrm{(c)}$ in Subsubsection~\ref{sss:same}, the following property:
\begin{itemize}\itemsep 0pt
 \item[$\mathrm{(bc')}$] each of $A$ and~$A^{\mathrm T}$ has exactly one eigenvector with eigenvalue~$1$, and it has the form $\begin{pmatrix}1 & * & * \end{pmatrix}$ for each of them.
\end{itemize}
There are 26 matrices~$A$ of this kind.

Modifying calculations of Subsection~\ref{ss:pf} for this case, we get $Q(1)=0$, as in~\eqref{QA}, but also $Q'(1)=0$, instead of~\eqref{QAT} (while $Q(0)=Q'(0)=1$ always, being the probability of the ``full sample space''). It follows then immediately that \emph{all} summands in the numerator in the rhs of~\eqref{sum-kw} always vanish, except $F(\vec 0)=1$. Hence, the probability of $k$ spins all taking value~$0\in \mathbb F_2$ is~$\frac{1}{2^k}$, hence they are all independent.

Remember that this applies to spins situated as described in the beginning of this section, and cyclic boundary conditions.

\end{document}